\newcommand{\markercross}{\scalebox{0.8}{$\pmb{\times}$}}
\newcommand{\markerline}{\rule[0.12em]{0.75em}{0.12em}}
\renewcommand*{\backrefalt}[4]{%
	\hypersetup{linkcolor=gray}%
	\color{gray}{%
		[%
		\ifcase #1 %
		No citations%
		\or
		Cited on p. #2%
		\else
		Cited on pp. #2%
		\fi
		]%
	}
}
	\theoremstyle{plain}
	\newtheorem{example}{Example} 
	\newtheorem{remark}{Remark} 
	\newtheorem{example}{Example} 
\crefname{remark}{Remark}{Remark} 
\crefname{Example}{Example}{Example} 
\crefname{subsection}{Section}{Section} 
\crefname{section}{Section}{Section} 
\newcommand{\R}{\mathbb R}
\newcommand{\oC}{\mathbb C}
\newcommand{\N}{\mathbb N}
\newcommand{\cF}{\mathcal{F}}
\newcommand{\cO}{\mathcal{O}}
\DeclareMathOperator{\Arg}{Arg}
\DeclareMathOperator*{\Res}{Res}
\newcommand{\suml}{\sum\limits}
\newcommand{\intl}{\int\limits}
\newcommand{\e}{\mathrm{e}}
\newcommand{\wt}[1]{\widetilde{#1}}
\begin{document}
\title{Subordination based approximation of Caputo fractional propagator and related numerical methods.}

\ifpdf
	\author{Dmytro Sytnyk\,\orcidlink{0000-0003-3065-4921}%
		\thanks{%
			Department of Numerical Mathematics, Institute of Mathematics, National Academy of Sciences of Ukraine, Kyiv, 01024, Ukraine; (\email{sytnik@imath.kiev.ua}). }%
	}
\else
	\author{\href{https://orcid.org/0000-0003-3065-4921}{Dmytro Sytnyk}%
	\thanks{%
		Department of Numerical Mathematics, Institute of Mathematics, National Academy of Sciences of Ukraine, Kyiv, 01024, Ukraine; (\href{mailto:sytnik@imath.kiev.ua}{sytnik@imath.kiev.ua}). }%
	}
\fi
\date{\today}
\maketitle

\begin{abstract}
	In this work, we propose an exponentially convergent numerical method for the Caputo fractional propagator $S_\alpha(t)$ and the associated mild solution of the Cauchy problem with time-independent sectorial operator coefficient $A$ and Caputo fractional derivative of order $\alpha \in (0,2)$ in time. The proposed methods are constructed by generalizing the earlier developed approximation of $S_\alpha(t)$ with help of the subordination principle. Such technique permits us to eliminate the dependence of the main part of error estimate on $\alpha$, while preserving other computationally relevant properties of the original approximation: native support for multilevel parallelism, the ability to handle initial data with minimal spatial smoothness, and stable exponential convergence for all $t \in [0, T]$. Ultimately, the use of subordination leads to a significant improvement of the method's convergence behavior, particularly for small $\alpha < 0.5$, and opens up further opportunities for efficient data reuse. To validate theoretical results, we consider applications of the developed methods to the direct problem of solution approximation, as well as to the inverse problem of fractional order identification.
\end{abstract}

\begin{keyword}
	Cauchy problem;  Caputo fractional derivative; propagator; subordination principle; contour representation; numerical method;  exponentially convergent; fractional order identification;
\end{keyword}

\ifpdf
	\begin{AMS}
		{34A08, 34K37, 35R11, 35R20,  65L05, 65J08, 65J10, 65M32}
	\end{AMS}
\else
	\textbf{AMS subject classifications:}
	{34A08, 34K37, 35R11, 35R20,  65L05, 65J08, 65J10, 65M32}
\fi
\section{Introduction}\label{FCPML_Intro}
In this work we consider solution operators associated with the following fractional-in-time Cauchy problem
\begin{subequations}\label{eq:FCP_DEBC}
	\begin{eqnarray}
		&\hspace*{-1.6em}\partial_t^\alpha u(t) + A u(t) = f(t),\quad  t  \in (0, T], \quad \alpha \in (0, 2),
		\label{eq:FCP_DE}\\
		&\begin{cases}
			u(0) = u_0,                & \text{if $0 < \alpha \leq 1$}, \\
			u(0) = u_0, \ u'(0) = u_1, & \text{if $1< \alpha < 2$}.
		\end{cases}
		\label{eq:FCP_BC}
	\end{eqnarray}
\end{subequations}
Here,  $\partial_t^\alpha$ denotes the Caputo fractional derivative of order
$\alpha$
(see, e.g. \cite[p.\,91]{Kilbas2006})
\begin{equation}\label{eq:FracDeriv_Caputo}
	\partial_t^\alpha u(t)= \frac{1}{\Gamma(n-\alpha)} \intl_0^t(t-s)^{n-\alpha-1}u^{(n)}(s)\, ds,
\end{equation}
with $u^{(n)}(s)$ being the standard derivative of integer order $n = \lceil \alpha \rceil$.
For clarity, we enforce $\partial_t^1 u(t) = u'(t)$.

The time-independent coefficient $A$ in \eqref{eq:FCP_DE}  is assumed to be a closed linear operator with the domain $D(A)$ dense in a Banach space $X = X(\|\cdot\|,\Omega)$ 
and the spectrum $\mathrm{Sp}(A)$ contained in the sectorial region $\Sigma(\rho_s, \varphi_s)$ of complex plane:
\begin{equation}\label{eq:SpSector}
	\Sigma(\rho_s, \varphi_s) = \left\{ z=\rho_s+\rho \e^{i\theta}:\quad \rho \in [0,\infty), \ \left|\theta\right| < \varphi_s
	\right\}, \quad \rho_s > 0, \  \varphi_s< \frac{\pi}{2}.
\end{equation}
The numbers $\rho_s$ and $\varphi_s$ are called spectral parameters of $A$.
In addition to the assumptions on the spectrum, we suppose that the resolvent of $A$:
$
	R\left(z, A\right) \equiv (zI-A)^{-1}
$
satisfies the bound
\begin{equation}\label{eq:ResSector}
	\left\|(zI-A)^{-1} x\right\| \leq \frac{M}{1+\left|z\right|} \|x\|, \quad M > 0,
\end{equation}
for all $x \in X$ and any fixed $z \in \oC \setminus \Sigma(\rho_s, \varphi_s)$.
Following the established convention \cite{bGavrilyuk2011}, we will cal such operators $A$ strongly positive.
The class of strongly positive operators includes second order elliptic partial differential operators \cite{Haase2006}, as well as more general strongly elliptic pseudo-differential operators defined over the bounded domain \cite{Bilyj2010}.
Spectral parameters $\rho_s$ and $\varphi_s$ of $A$ can be estimated from the coefficients of its  differential expression \cite{Gavrilyuk2004}.

In this work, we focus on the mild solution to the given problem.
Let us consider a Volterra integral equation
\begin{equation}\label{eq:FCP_Mild}
	u(t) = \suml_{k=0}^{n-1}u_k t^k - J_\alpha A u(t) + J_\alpha f(t),
\end{equation}
where, $J_\alpha$ denotes the Riemann--Liouville integral operator,
\begin{equation*}
	J_\alpha v(t) = \frac{1}{\Gamma(\alpha)} \intl_0^t(t-s)^{\alpha-1}v(s)\, ds.
\end{equation*}
\begin{definition}\label{def:FCP_Mild_Sol}
	Let $\alpha \in (0,2)$, $u_0, u_1 \in X$ and $f \in C([0, T],X)$, a function $u \in C([0, T],D(A))$ is called a mild solution of \eqref{eq:FCP_DEBC} if it satisfies integral equation \eqref{eq:FCP_Mild}.
\end{definition}
If the parameters $\alpha$, $u_0$, $u_1$, $f(t)$ satisfy \cref{def:FCP_Mild_Sol},  the linear Volterra integral equation \eqref{eq:FCP_Mild} admits a unique solution  \cite[Prop. 1.2]{Pruess2013}:
\[
	u(t) = \frac{\mathrm{d}}{\mathrm{dt}}\intl_{0}^tS_\alpha(t - s) \left(u_0 + u_1 s + J_\alpha f(s)\right) \, ds.
\]
Henceforth, we assume that $u_1 \equiv 0$, if $\alpha \in (0, 1]$.
Depending on the available temporal regularity of $f(t)$, this variation of parameters formula may take several different forms \cite{Hernandez2010,Zhou2013,Keyantuo2013,Li2015}.
Comparison of existing representations were conducted in \cite{Sytnyk2023b},
where it was ultimately concluded that the following form of the mild solution to \eqref{eq:FCP_DEBC} is more favorable from the numerical point of view than its analogues
\begin{equation}\label{eq:FCP_InhomSol_rep}
	u(t) = S_\alpha(t) u_0 + \intl_{0}^tS_\alpha(t - s) u_1\, ds + J_\alpha S_{\alpha}(t)f(0) + \intl_0^t S_{\alpha}(t - s)   J_\alpha f'(s) d s.
\end{equation}
The inhomogeneous part of \eqref{eq:FCP_InhomSol_rep} is well-defined in the strong sense for arbitrary finite $t \geq 0$ under assumptions $\exists \chi >0$: $f(0),f'(t) \in D(A^\chi)$, which can be leveraged to construct solution approximations free of the accuracy degradation near $t=0$. (for further details see \cite{Sytnyk2023}).
Such distinctive feature of \eqref{eq:FCP_InhomSol_rep} stems from the fact that this formula,  unlike other solution representations, involves only the problem's native propagator $S_\alpha(t)$.
It is formally defined as follows.
\begin{definition}[\cite{Pruess2013}]\label{def:FCP_Sol_Op}
	Let $\alpha \in (0,2)$. A bounded linear operator $S_\alpha(t): X \rightarrow X$ is called the propagator of \eqref{eq:FCP_DEBC}  if the following three conditions are satisfied:
	\begin{enumerate}[label=P\arabic*., ref=P\arabic*] \itemsep.1em
		\item $S_\alpha(t)$ is strongly continuous on $X$, for $t \geq 0$, and $S_\alpha(0) = I$.%
		      \label{def:FCP_Sol_Op_Id}
		\item $S_\alpha(t)D(A) \subset D(A)$ and $AS_\alpha(t) x = S_\alpha(t)Ax$, for all $x \in D(A)$, $t \geq 0$.%
		      \label{def:FCP_Sol_Op_Comm}
		\item For any $u_0 \in D(A)$, the function $u(t) = S_\alpha(t) u_0$, $t \geq 0$, is the solution of equation  \eqref{eq:FCP_Mild} with $u_1 = 0$ and $f(t) \equiv 0$.%
		      \label{def:FCP_Sol_Op_S}
	\end{enumerate}
\end{definition}
Operator $A$ from the above definition is usually called the generator of $S_{\alpha}(t)$.
For the class of strongly positive generators, $S_{\alpha}(t)$ can be defined via the contour integration.
Let $\Gamma_I$ denote a contour chosen in such a way that $e^{zt}$ remains bounded for $z \in \Gamma_I$ and the curve $z^\alpha$, $z \in \Gamma_I$, is positively oriented with respect to $\mathrm{Sp}(-A)\cup \{0\}$.
Then, the linear operator $S_\alpha(t)$, defined by
\begin{equation}\label{eq:FCP_SO_cont_repr}
	S_{\alpha}(t) x  = \frac{1}{2\pi i} \intl_{\Gamma_I}  e^{zt}z^{\alpha-1} (z^\alpha I + A)^{-1} x dz, \quad x \in X,
\end{equation}
is the propagator of \eqref{eq:FCP_DEBC} \cite[Lem. 1]{Sytnyk2023b}.
Formula \eqref{eq:FCP_SO_cont_repr} offers a convenient way to numerically evaluate mild solution \eqref{eq:FCP_InhomSol_rep} by approximating the involved integral operators with quadratures.
This technique was applied in \cite{Sheen2003,gm5,McLTh}  to the particular variants of \eqref{eq:FCP_DEBC} and was recently used in \cite{Sytnyk2023} to construct the numerical solution of  the given Cauchy problem in its full generality.
We direct an interested reader to \cite{bGavrilyuk2011,Sytnyk2023,Sytnyk2023b} for more details and comparison between the existing methods.

In the current work we focus on a more general contour representation of the fractional propagator, obtained from the identity
\begin{equation}\label{eq:FCP_SO_subord_repr}
	S_\alpha(t) x = \intl_0^\infty \Phi_\gamma(s) S_{\beta}(st^{\gamma}) x\, d s, \quad \gamma = \frac{\alpha}{\beta},
\end{equation}
where $0 < \alpha \leq \beta < 2$ and $\Phi_\gamma(z) = \suml_{n=0}^\infty \frac{(-z)^n}{n! \Gamma(1-\gamma (n + 1) )}$ is the \textit{M}-Wright function \cite{Mainardi2010}.
Formula \eqref{eq:FCP_SO_subord_repr} is a consequence of the so-called subordination principle.
It was substantiated by Pr\"uss \cite{Pruess2013} in a broader context of abstract integral equations with convolution kernels and subsequently applied to equation \eqref{eq:FCP_Mild} by Bazhlekova, who derived the identity \eqref{eq:FCP_SO_subord_repr} in the closed form \cite{Bazhlekova2000,Bazhlekova2001}.
This identity becomes particularly useful if $\beta = 1$,  then the properties of the fractional propagator $S_\alpha(t)$ can be studied using the well-established theory of exponential semigroups $S_1(t) = \exp(-At)$.
The exponential subordination was used in \cite{Bazhlekova2001,Li2010,Chen2016a} to prove the existence and boundedness of $S_\alpha(t)$, $\alpha \in (0,1)$, along with its powers,
and to establish time and space regularity estimates for $A^{\kappa}S_\alpha(t)$, $\kappa > 0$.
Further theoretical developments include extensions of representation \eqref{eq:FCP_SO_subord_repr} to a wider class of so-called almost sectorial operators, and propagators connected with a more general notion of fractional derivative \cite{Bazhlekova2018}; as well as the applications of \eqref{eq:FCP_SO_subord_repr} to nonlinear generalizations of the given fractional Cauchy problem (see \cite{Zhou2016,Lizama2019} and the references therein).

We are interested in exploring the numerical potential of \eqref{eq:FCP_SO_subord_repr}. From the outset this formula does not seem to be computationally appealing. The convergent approximation of \eqref{eq:FCP_SO_subord_repr} requires accurate numerical evaluation of $S_{\beta}(st^{\beta}) x$ on a non-uniform grid, containing both arbitrarily small and large values of $s \in (0, \infty)$. The accuracy of existing time-stepping methods \cite{Garrappa2015a,Baffet2017,jin2019numerical,Khristenko2021,Banjai2022} degrade near $s=0$ due to the singularity of $S_{\beta}(0) x$. The method from our recent work \cite{Sytnyk2023} is free from such degradation; however, it struggles with the evaluation of $S_{\beta}(s) x$ for large $s$. As far as we know, the numerical potential of \eqref{eq:FCP_SO_subord_repr} has been largely left unexplored, with exception of \cite{Aceto2022} and the original works \cite{Bazhlekova2015,Bazhlekova2018} where it was applied to the particular cases of \eqref{eq:FCP_DEBC}.

To circumvent the problem with evaluating $S_{\beta}(s) x$ for a wide range of $s$, in \cref{sec:FCPML_subordinated_approximation}, we combine identity \eqref{eq:FCP_SO_subord_repr} with the contour representation of $S_{\beta}(st^{\beta}) x$ given by \eqref{eq:FCP_SO_cont_repr}.
After rearrangement, it permits us to decouple the evaluation of the time-dependent component in  \eqref{eq:FCP_SO_subord_repr} from the part that depends on $A$, and discretize the resulting representation by the sinc-quadrature rule.
Full description of the discretization procedure, along with its error analysis, is provided in \cref{sec:FCPML_Prop_Approx}.
The constructed approximation of $S_\alpha(t)$ inherits all computationally relevant properties of the method from \cite[Sec. 3.2]{Sytnyk2023}, including the ability to handle arguments with minimal spatial smoothness, capacity for multi-level parallelism, as well as the exponential convergence for any $t \in [0, T]$ in the presence of additional errors caused by the spatial discretization and the finite-precision arithmetic.
Moreover, the spatial component of the proposed evaluation procedure for $S_\alpha(t)x$ relies only on the solutions $r_k$ of the sequence of resolvent equations $(z_k^\beta+A)r_k = x$, making the accuracy of the resulting numerical method for \eqref{eq:FCP_DEBC} independent of $\alpha \in (0, \beta]$.
This offers a measurably faster and more stable error decay compared to the behavior of similar methods from \cite{Sytnyk2023,Colbrook2022a,Fischer2019,McLean2010}, especially for small $\alpha < 0.5$.
In \Cref{sec:FCPML_Applications_Examples}, we move on to apply the proposed propagator approximation to the numerical evaluation of mild solution \eqref{eq:FCP_InhomSol_rep}.
To this end, we adapt relevant results about the discretization of $J_\alpha$, as well as the other components of  \eqref{eq:FCP_InhomSol_rep}, from \cite{Sytnyk2023}.
Here, we also experimentally validate the convergence properties of the developed numerical method, in realistic situations when the spatial discretization of the solution is performed by the sub-exponentially convergent numerical methods, and demonstrate how the mentioned independence of $r_k$ on $\alpha$ and $t$ can lead to considerable performance gains using the fractional-order identification problem as a model example.

\section{Subordination based propagator representation}\label{sec:FCPML_subordinated_approximation}
Our primary interest in \eqref{eq:FCP_SO_subord_repr} is motivated by the numerical potential of this formula for evaluating $S_\alpha(t)$.
More precisely, when $\alpha \leq \beta \leq 1$, the propagator approximation via \eqref{eq:FCP_SO_cont_repr}, \eqref{eq:FCP_SO_subord_repr} should lead to a faster numerical method than the quadrature of \eqref{eq:FCP_SO_cont_repr} alone, because the quadrature's convergence order peaks at $\alpha = 1$.

Let $E_{\alpha,\beta}(z)$ denote a two-parameter Mittag-Leffler function
\begin{equation}
	E_{\alpha,\beta}(z)
	= \suml_{k=0}^{\infty} \frac{z^k}{\Gamma(\alpha n + \beta)}.
\end{equation}
For fixed $\alpha \in (0,1)$, $\beta \in [1, 2]$, this function satisfies the estimate \cite[Thm. 4.3]{Gorenflo2020}
\begin{equation}\label{eq:ML1_bound}
	|E_{\alpha,\beta}(z)| \leq \frac{M_1}{1+|z|}
	+M_2 
	\begin{cases}
		|z|^{\frac{1-\beta}{\alpha}}e^{\Re{z^{1/\alpha}}},&  |\arg{z}| \leq \pi \alpha, \\
		0, & \hspace*{-2.54em}\pi \alpha < |\arg{z}|  < \pi;
	\end{cases}
\end{equation}
with some bounded constants $M_1, M_2 \geq 0$ independent of $z \in \oC$.
Besides, we will also need an identity connecting the Mittag-Leffler and M-Wright functions \cite[Eq. (A.5)]{Bazhlekova2000}:
\begin{equation}\label{eq:Wright_Laplace_transform}
	E_{\gamma,1}(z) = \intl_0^\infty e^{zt} \Phi_\gamma(t) \, dt, \quad z \in \oC, \ \gamma \in (0,1).
\end{equation}

\begin{lemma}\label{lem:FCP_SO_ML_repr}
	Assume that $A$ is a strongly positive operator with spectral parameters $\rho_s, \varphi_s$ and $\beta \in (0, 2)$.
	Then, for any $\alpha \leq \min{\left\{\beta, 2 \left (1 - \frac{\varphi_s}{\pi} \right) \right\} }$, $x \in X$, the operator function $S_{\alpha}(t)$:
	\begin{equation}\label{eq:FCP_SO_MLcont_repr}
		S_{\alpha}(t) x  =
		\frac{1}{2\pi i} \intl_{\Gamma_I} E_{\gamma,1}{\left(z t^{\gamma}\right)}  z^{\beta-1} (z^\beta I + A)^{-1} x\, dz, \quad \gamma = \frac{\alpha}{\beta},
	\end{equation}
	is well defined and bounded.
	Moreover, $S_\alpha(t)$ is the propagator of \eqref{eq:FCP_DEBC}.
	Here, $\Gamma_I$ denotes the contour chosen in such a way that $E_{\gamma,1}{\left(z\right )}$ is uniformly bounded for $z \in \Gamma_I$ and the curve $z^\beta$ is positively oriented with respect to $\mathrm{Sp}(-A)\cup \{0\}$.
\end{lemma}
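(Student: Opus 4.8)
The plan is to read \eqref{eq:FCP_SO_MLcont_repr} as the result of inserting the known contour representation \eqref{eq:FCP_SO_cont_repr} of the $\beta$-propagator into the subordination formula \eqref{eq:FCP_SO_subord_repr} and then collapsing the $s$-integral with \eqref{eq:Wright_Laplace_transform}. First I would dispatch the boundary case $\alpha=\beta$ (i.e.\ $\gamma=1$): there $E_{1,1}(z)=e^z$ and \eqref{eq:FCP_SO_MLcont_repr} is literally \eqref{eq:FCP_SO_cont_repr}, already shown to define the propagator in \cite[Lem.~1]{Sytnyk2023b}. For the genuine range $\gamma=\alpha/\beta\in(0,1)$, I would substitute $S_\beta(st^\gamma)x=\frac{1}{2\pi i}\intl_{\Gamma_I}e^{zst^\gamma}z^{\beta-1}(z^\beta I+A)^{-1}x\,dz$ into \eqref{eq:FCP_SO_subord_repr}, interchange the two integrations, and apply \eqref{eq:Wright_Laplace_transform} with argument $w=zt^\gamma$ to replace the inner integral $\intl_0^\infty\Phi_\gamma(s)e^{(zt^\gamma)s}\,ds$ by $E_{\gamma,1}(zt^\gamma)$, which produces the claimed formula.

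The next step is to pin down an admissible contour and to see where the hypotheses enter. The resolvent $(z^\beta I+A)^{-1}$ is analytic as long as $z^\beta$ avoids $\mathrm{Sp}(-A)$, which is contained in the sector $|\arg(z^\beta)|\ge\pi-\varphi_s$; I would therefore take $\Gamma_I$ so that its image under $z\mapsto z^\beta$ is a Hankel-type contour with arms at $|\arg(z^\beta)|=\phi$. Enclosing $\mathrm{Sp}(-A)\cup\{0\}$ positively demands $\phi<\pi-\varphi_s$, whereas \eqref{eq:ML1_bound} forces $|\arg z|=\phi/\beta>\pi\gamma/2$ (equivalently $\phi>\pi\alpha/2$) in order to keep $E_{\gamma,1}(zt^\gamma)$ bounded and decaying along the arms, uniformly over $t\in[0,\infty)$ because $\arg(zt^\gamma)=\arg z$. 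A compatible $\phi$ with $\pi\alpha/2<\phi<\pi-\varphi_s$ exists precisely when $\pi\alpha/2+\varphi_s<\pi$, which is the hypothesis $\alpha\le 2(1-\varphi_s/\pi)$; the remaining hypothesis $\alpha\le\beta$ secures $\gamma\le 1$, as required for both \eqref{eq:ML1_bound} and \eqref{eq:Wright_Laplace_transform}. On such a contour I would combine the resolvent bound \eqref{eq:ResSector}, which gives $|z|^{\beta-1}\|(z^\beta I+A)^{-1}x\|\le M\|x\|\,|z|^{\beta-1}(1+|z|^\beta)^{-1}=O(|z|^{-1})$, with the decay of $E_{\gamma,1}$ from \eqref{eq:ML1_bound} to obtain an absolutely convergent integral with a $t$-independent bound, which establishes well-definedness and uniform boundedness.

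The main obstacle I expect is the rigorous justification of the interchange of integrations. The inner Laplace integral converges for every $z$ (the $M$-Wright density $\Phi_\gamma\ge0$ decays super-exponentially, so $E_{\gamma,1}$ is entire), but the absolute-value majorant $\intl_0^\infty\Phi_\gamma(s)|e^{zst^\gamma}|\,ds=E_{\gamma,1}(t^\gamma\Re z)$ grows where $\Re z>0$, so the double integral is only conditionally convergent on arms with $|\arg z|<\pi/2$, a situation forced by large $\beta$. I would therefore justify Fubini first on a contour with $\Re z\le-\delta<0$ (admissible whenever $\beta<2(1-\varphi_s/\pi)$, where absolute convergence makes the interchange immediate), and then remove the restriction on $\beta$ by analytic continuation: the right-hand side of \eqref{eq:FCP_SO_MLcont_repr} depends analytically on $\beta\in(\alpha,2)$ while $S_\alpha(t)$ does not, so the two agree throughout that range once they agree on a subinterval, the contour-independence and deformations being supplied by Cauchy's theorem together with the decay bounds above. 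Finally, the propagator property is inherited for free: \eqref{eq:FCP_SO_subord_repr} exhibits $S_\alpha$ as a subordination of the propagator $S_\beta$, and the subordination principle \cite{Pruess2013,Bazhlekova2001} preserves \cref{def:FCP_Sol_Op_Id,def:FCP_Sol_Op_Comm,def:FCP_Sol_Op_S}; alternatively these can be verified directly from \eqref{eq:FCP_SO_MLcont_repr} — identity $S_\alpha(0)=I$ from $E_{\gamma,1}(0)=1$ together with the standard evaluation $\frac{1}{2\pi i}\intl_{\Gamma_I}z^{\beta-1}(z^\beta I+A)^{-1}x\,dz=x$, strong continuity from the uniform majorant, and commutation with $A$ from that of the resolvent.
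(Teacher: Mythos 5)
Your proposal is correct in substance and follows essentially the same route as the paper's proof: the admissible contour opening is determined by playing \eqref{eq:ML1_bound} (scalar factor bounded for $|\Arg z|\geq \pi\alpha/(2\beta)$) against \eqref{eq:ResSector} (resolvent factor bounded for $|\Arg z|<(\pi-\varphi_s)/\beta$), which is exactly where the constraint $\alpha\leq\min\{\beta,\,2(1-\varphi_s/\pi)\}$ comes from; then \eqref{eq:Wright_Laplace_transform} trades $E_{\gamma,1}(zt^{\gamma})$ for an integral against $\Phi_\gamma$, the two integrations are interchanged, the inner contour integral is identified with $S_\beta(st^{\gamma})x$ through \eqref{eq:FCP_SO_cont_repr}, and equality with subordination formula \eqref{eq:FCP_SO_subord_repr} gives the propagator property, with $t=0$ dispatched separately. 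The paper runs this computation from \eqref{eq:FCP_SO_MLcont_repr} toward \eqref{eq:FCP_SO_subord_repr} and you run it in the opposite direction, but that is the same argument.

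The one place where you genuinely go beyond the paper is the justification of the interchange, and your concern is well founded. The paper swaps the integrals without comment; as you note, absolute convergence of the double integral holds only if the contour arms can be taken with $|\Arg z|>\pi/2$, which requires $(\pi-\varphi_s)/\beta>\pi/2$, i.e. $\beta<2(1-\varphi_s/\pi)$. The lemma, however, permits $\beta\in\bigl[2(1-\varphi_s/\pi),\,2\bigr)$ provided $\alpha$ is small, and in that range every admissible contour has arms along which $\Re z\to+\infty$; there the Fubini majorant $E_{\gamma,1}(t^{\gamma}\Re z)$ blows up and, worse, the inner integral $\int_{\Gamma_I}e^{zst^{\gamma}}z^{\beta-1}(z^{\beta}I+A)^{-1}x\,dz$ produced by the swap diverges outright, so the paper's written argument covers only $\beta<2(1-\varphi_s/\pi)$. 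Your two-step remedy (Fubini on a contour whose arms reach into the left half-plane, then real-analytic continuation in $\beta$, using contour independence via Cauchy's theorem and the identity theorem on the connected parameter interval) is a sound way to close this. Two small repairs to your write-up: first, no admissible contour can lie entirely in $\{\Re z\leq-\delta\}$, since the curve $z^{\beta}$ must wind around the origin and hence $\Gamma_I$ crosses the positive real axis; this is harmless because only the arms matter for the majorant, the compact portion contributing a bounded amount. Second, your claimed absolutely convergent, $t$-independent bound cannot hold down to $t=0$: at $t=0$ the integrand decays only like $1/|z|$, which is precisely why the paper asserts strong convergence of \eqref{eq:FCP_SO_MLcont_repr} only for $t>0$ and introduces the corrected representation of \cref{thm:FCP_SO_MLcor_cont_repr} to repair convergence at $t=0$.
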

\begin{proof}
	To begin with, we would like to determine the relations between $\alpha$ and the spectral parameters of $A$ that are sufficient for the existence of $\Gamma_I$.
	According to \eqref{eq:ML1_bound}, the scalar part of the integrand in \eqref{eq:FCP_SO_MLcont_repr} is bounded if $z \in \oC \setminus \{0\}$ and $\Arg{(z)}  \in \left[\frac{\pi \alpha}{2 \beta}, \pi \right]$.
	Meanwhile, the operator part of the integrand is bounded if and only if $\Arg{(z)}  \in \left(0, \frac{\pi-\varphi_s}{\beta}\right)$.
	By combining the conditions that guaranty a nonempty intersection of these two intervals for $\Arg{(z)}$ with the inequality $\alpha \leq \beta$ we get the lemma's constraint on $\alpha$.
	Together with bounds \eqref{eq:ResSector} and \eqref{eq:ML1_bound}, they
	imply the existence of a suitable contour $\Gamma_I$ and the strong convergence of the integral in \eqref{eq:FCP_SO_MLcont_repr} for positive $t$.
	Next, using equality \eqref{eq:Wright_Laplace_transform} and propagator representation \eqref{eq:FCP_SO_cont_repr}, we transform the right-hand side of \eqref{eq:FCP_SO_MLcont_repr} as follows
	\[
		\begin{aligned}
			\frac{1}{2\pi i} \intl_{\Gamma_I}
			 & E_{\gamma,1}{\left(z t^{\gamma}\right)} z^{\beta-1} (z^\beta I + A)^{-1} x\, dz                                                                      \\
			 & = \frac{1}{2\pi i} \intl_{\Gamma_I} \intl_0^\infty \Phi_\gamma(s) \exp{\left(z st^{\gamma}\right)} \, ds \,  z^{\beta-1} (z^\beta I + A)^{-1} x\, dz \\
			 & = \frac{1}{2\pi i} \intl_0^\infty \Phi_\gamma(s) \intl_{\Gamma_I} \exp{\left(z st^{\gamma}\right)} z^{\beta-1} (z^\beta I + A)^{-1} x\, dz ds
			= \intl_0^\infty \Phi_\gamma(s) S_{\beta}(st^{\gamma}) x\, d s.
		\end{aligned}
	\]
	The derived identity demonstrates equivalence between \eqref{eq:FCP_SO_MLcont_repr}  and \eqref{eq:FCP_SO_subord_repr} for all $t>0$.
	When $t=0$, formula \eqref{eq:FCP_SO_MLcont_repr} coincides with propagator representation \eqref{eq:FCP_SO_cont_repr}, moreover $S_\beta(0)  = S_\alpha(0) = I$.
	This concludes the proof.
\end{proof}
We highlight that, in contrast to \eqref{eq:FCP_SO_subord_repr}, formula \eqref{eq:FCP_SO_MLcont_repr} does not require evaluation of the auxiliary propagator $S_\beta(t)$, $t \in (0, \infty)$.
This removes the potential source of numerical instabilities, mentioned in \cref{FCPML_Intro}, and is also deemed more reasonable from a causality perspective \cite{Kochubei2011}.

Before moving to a more in-depth discussion on how the mentioned structural properties of \eqref{eq:FCP_SO_MLcont_repr} affect the quadrature-based approximation of $S_\alpha(t)$, we first need to make sure that the involved integral is convergent in the strong sense at $t = 0$.
To achieve that we will construct a modification of \eqref{eq:FCP_SO_MLcont_repr} using the technique devised in \cite{gm5}.
It relies on the following technical result.
\begin{proposition}[\cite{McLean2010,Sytnyk2023}]\label{prop:FCP_res_cor}
	Let  $A$ be a strongly positive operator.
	If $x \in D(A^{\kappa})$, with some $\kappa \in (0,1]$,  then
	for any $z^\beta \notin \mathrm{Sp}(-A) \cup \{0\}$,
	\begin{equation}\label{eq:FCP_res_cor_norm_est}
		\left\| z^{\beta-1}(z^\beta I+A)^{-1} x - \frac{1}{z} x\right \|
		\leq \frac{K(1+M) \left \|A^{\kappa} x \right \|}{|z|(1+|z|^\beta)^\kappa}, \quad \beta \geq 0,
	\end{equation}
	where $K > 0$ is some constant and $M$ is defined by \eqref{eq:ResSector}.
\end{proposition}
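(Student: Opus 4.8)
The plan is to reduce the estimate to a resolvent bound on $A(z^\beta I + A)^{-1}$ by a single algebraic identity, and then to convert the extra regularity $x \in D(A^\kappa)$ into the decay rate $(1+|z|^\beta)^{-\kappa}$ by means of an interpolation (moment) inequality for the fractional powers of $A$. First I would rewrite the left-hand side so as to expose the cancellation. Using $z^{\beta-1} = z^\beta/z$ together with the resolvent identity $z^\beta(z^\beta I + A)^{-1} = I - A(z^\beta I + A)^{-1}$, one obtains
\begin{equation*}
	z^{\beta-1}(z^\beta I + A)^{-1} x - \frac{1}{z}\, x = -\frac{1}{z}\, A (z^\beta I + A)^{-1} x.
\end{equation*}
This already produces the factor $1/|z|$ present on the right-hand side of \eqref{eq:FCP_res_cor_norm_est} and isolates the quantity $A(z^\beta I + A)^{-1} x$, which must be shown to decay like $(1+|z|^\beta)^{-\kappa}$.

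Next I would use the hypothesis $x \in D(A^\kappa)$ to redistribute the powers of $A$. Since the fractional powers of a strongly positive operator commute with its resolvent and satisfy $A = A^{1-\kappa}A^\kappa$ on $D(A^\kappa)$, we have
\begin{equation*}
	A (z^\beta I + A)^{-1} x = A^{1-\kappa}(z^\beta I + A)^{-1} A^\kappa x,
\end{equation*}
so that $\|A(z^\beta I + A)^{-1} x\| \le \|A^{1-\kappa}(z^\beta I + A)^{-1}\|\, \|A^\kappa x\|$. It then remains to bound the operator norm $\|A^{1-\kappa}(z^\beta I + A)^{-1}\|$ by $K(1+M)(1+|z|^\beta)^{-\kappa}$.

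To that end I would apply the moment inequality $\|A^{1-\kappa} y\| \le C \|Ay\|^{1-\kappa}\|y\|^{\kappa}$, valid for strongly positive $A$ and $y \in D(A)$, to $y = (z^\beta I + A)^{-1} v$. The two ingredients are the resolvent bound \eqref{eq:ResSector}, applied with argument $-z^\beta$ so that $\|(z^\beta I + A)^{-1} v\| \le M(1+|z|^\beta)^{-1}\|v\|$, and the elementary estimate $\|A(z^\beta I + A)^{-1} v\| = \|v - z^\beta(z^\beta I + A)^{-1} v\| \le (1+M)\|v\|$. Substituting these into the moment inequality gives $\|A^{1-\kappa}(z^\beta I + A)^{-1} v\| \le C(1+M)^{1-\kappa} M^{\kappa}(1+|z|^\beta)^{-\kappa}\|v\|$, and since $(1+M)^{1-\kappa}M^{\kappa}\le 1+M$ this yields the required operator-norm bound with $K = C$. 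Chaining the three steps reproduces \eqref{eq:FCP_res_cor_norm_est} exactly.

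The main obstacle is the last step: one must guarantee that the moment inequality holds with a constant $C$ uniform in $z$, and that the resolvent bound \eqref{eq:ResSector} is genuinely in force for the argument $z^\beta$ under the stated hypothesis $z^\beta \notin \mathrm{Sp}(-A)\cup\{0\}$. For a strongly positive operator the moment inequality is standard, since sectoriality together with $0 \in \rho(A)$ (ensured by $\rho_s > 0$) places $A$ within the scope of the interpolation theory for fractional powers; thus the only genuine care needed is the bookkeeping of the sector geometry to confirm that $-z^\beta$ lies in the region where \eqref{eq:ResSector} applies. The endpoint $\kappa = 1$ is trivial, because then $A^{1-\kappa} = I$ and the moment inequality becomes vacuous.
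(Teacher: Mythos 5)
Your argument is correct, and it is essentially the standard proof of this result: the paper does not prove \cref{prop:FCP_res_cor} itself but imports it from \cite{McLean2010,Sytnyk2023}, where the estimate is obtained exactly along your lines --- reduce the left-hand side to $-z^{-1}A(z^\beta I+A)^{-1}x$ via the resolvent identity, write $A(z^\beta I+A)^{-1}x=A^{1-\kappa}(z^\beta I+A)^{-1}A^{\kappa}x$, and interpolate (moment inequality for fractional powers) between $\left\|A(z^\beta I+A)^{-1}\right\|\leq 1+M$ and $\left\|(z^\beta I+A)^{-1}\right\|\leq M\left(1+|z|^\beta\right)^{-1}$. Your closing caveat is also well placed: the hypothesis $z^\beta\notin\mathrm{Sp}(-A)\cup\{0\}$ alone does not put $-z^\beta$ in the region where \eqref{eq:ResSector} holds with the constant $M$; this is an imprecision of the statement as quoted, and it is harmless here only because every application in the paper takes $z$ on the contour $\Gamma_I$ of \cref{lem:FCP_SO_ML_repr}, whose construction keeps $-z^\beta$ outside $\Sigma(\rho_s,\varphi_s)$.
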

\begin{corollary}\label{thm:FCP_SO_MLcor_cont_repr}
	Assume that $A$, $\alpha$, $\beta$, $\Gamma_I$ satisfy the conditions of \cref{lem:FCP_SO_ML_repr}.
	If $x \in D(A^\kappa)$, for some $\kappa>0$, then the action of propagator $S_\alpha(t)$ on $x$ can be evaluated as follows
	\begin{equation}\label{eq:FCP_SO_MLcor_cont_repr}
		S_\alpha(t)x
		= \frac{1}{2\pi i} \intl_{\Gamma_I} E_{\gamma,1}{\left(z t^{\gamma}\right)} \left( z^{\beta-1} (z^\beta I + A)^{-1}   - \frac{1}{z}I  \right)x\, dz + x, \quad \gamma = \frac{\alpha}{\beta}.
	\end{equation}
	The contour integral in the above formula is strongly convergent for any $t \geq 0$.
\end{corollary}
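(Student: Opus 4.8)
The plan is to derive \eqref{eq:FCP_SO_MLcor_cont_repr} from the representation \eqref{eq:FCP_SO_MLcont_repr} of \cref{lem:FCP_SO_ML_repr} by subtracting the leading-order behaviour $\frac{1}{z}x$ of the operator factor and identifying the contour integral of this subtracted term with the residue of the scalar integrand at $z=0$. First I would reduce to the case $\kappa \in (0,1]$: since $A$ is strongly positive, $D(A^{\kappa}) \subseteq D(A^{\kappa'})$ for every $0 < \kappa' \leq \kappa$, so replacing $\kappa$ by $\min\{\kappa,1\}$ when necessary lets the bound \eqref{eq:FCP_res_cor_norm_est} of \cref{prop:FCP_res_cor} be applied verbatim to $\left(z^{\beta-1}(z^\beta I+A)^{-1}-\frac1z I\right)x$.

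Next, for fixed $t>0$ I would write the operator factor as $z^{\beta-1}(z^\beta I+A)^{-1}x = \left(z^{\beta-1}(z^\beta I+A)^{-1}-\frac1z I\right)x + \frac1z x$ and split the integral in \eqref{eq:FCP_SO_MLcont_repr} accordingly; the split is legitimate provided each resulting integral converges separately. The integral of the corrected term is controlled by combining \eqref{eq:FCP_res_cor_norm_est}, which dominates the operator factor by $K(1+M)\|A^{\kappa}x\|\,|z|^{-1}(1+|z|^\beta)^{-\kappa}$, with the uniform boundedness of $E_{\gamma,1}(zt^\gamma)$ on $\Gamma_I$ coming from \eqref{eq:ML1_bound} and the defining property of $\Gamma_I$ (on which $\Re z^{1/\gamma}$ is bounded above, so that $|E_{\gamma,1}(zt^\gamma)|$ is dominated by a constant uniformly for $t\in[0,T]$). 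The resulting integrand decays like $|z|^{-1-\beta\kappa}$ at infinity and is therefore absolutely integrable; crucially this remains true at $t=0$, where $E_{\gamma,1}(0)=1$, which is exactly the strong convergence claimed for all $t\geq0$. Since the dominating function is independent of $t$, this corrected integral converges uniformly on $[0,T]$ and is hence continuous in $t$.

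The central step is the evaluation of the remaining scalar integral $\frac{1}{2\pi i}\int_{\Gamma_I} E_{\gamma,1}(zt^\gamma)z^{-1}\,dz$. Because $E_{\gamma,1}$ is entire and $zt^\gamma$ is holomorphic in $z$, the integrand $E_{\gamma,1}(zt^\gamma)/z$ is meromorphic with a single simple pole at $z=0$, of residue $E_{\gamma,1}(0)=1$, and the origin is enclosed by $\Gamma_I$ by the orientation convention of \cref{lem:FCP_SO_ML_repr}. Closing $\Gamma_I$ with a large circular arc enclosing the origin, on which $\arg(zt^\gamma)$ lies outside $[-\pi\gamma,\pi\gamma]$ so that the exponential term of \eqref{eq:ML1_bound} is absent and $E_{\gamma,1}(zt^\gamma)/z=O(|z|^{-2})$, makes the arc contribution vanish as its radius grows. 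The residue theorem then yields $\frac{1}{2\pi i}\int_{\Gamma_I} E_{\gamma,1}(zt^\gamma)z^{-1}\,dz = 1$, so the $\frac1z$ term contributes exactly $x$ and \eqref{eq:FCP_SO_MLcor_cont_repr} holds for every $t>0$. I expect this residue/arc estimate to be the main obstacle: it requires the decay of $E_{\gamma,1}$ away from the sector $|\arg w|\leq\pi\gamma$ together with care that the closing arc stays in the region of decay and does not recross $\Gamma_I$, and one must note that the split itself is available only for $t>0$, since at $t=0$ the factor $\frac1z$ is merely conditionally integrable.

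Finally, I would pass to $t=0$ by continuity. The corrected integral is continuous on $[0,T]$ by the uniform convergence established above, while $S_\alpha(t)x \to S_\alpha(0)x = x$ as $t\to0^+$ by the strong continuity property \cref{def:FCP_Sol_Op_Id}. Letting $t\to0^+$ in the identity valid for $t>0$ therefore extends \eqref{eq:FCP_SO_MLcor_cont_repr} to $t=0$ (forcing, consistently, the corrected integral to vanish there), which completes the proof.
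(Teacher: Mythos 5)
Your proposal is correct and follows essentially the same route as the paper's own proof: split off the $\tfrac{1}{z}x$ term from the representation of \cref{lem:FCP_SO_ML_repr}, use \cref{prop:FCP_res_cor} together with $x \in D(A^\kappa)$ to get strong (absolute) convergence of the corrected integral for all $t \geq 0$, and identify the remaining scalar integral with the residue of $E_{\gamma,1}\left(z t^{\gamma}\right)/z$ at $z=0$, which equals $1$. Your additional details — the closing-arc justification of the residue identity (which, strictly, only needs the arc to stay in the region where $\Re{\left(z^{1/\gamma}\right)} \leq 0$, not outside $|\arg z| \leq \pi\gamma$) and the passage to $t=0$ by continuity, where the paper's residue claim is literally valid only for $t>0$ — are refinements of steps the paper asserts without elaboration, not a different method.
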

\begin{proof}
	Owing to the conditions imposed on $ \Gamma_I$ in \cref{lem:FCP_SO_ML_repr} and \eqref{eq:ML1_bound}, the function $zE_{\gamma,1}{\left(z\right)}$, $z \in \Gamma_I$, remains bounded.
	Thus, we can rewrite \eqref{eq:FCP_SO_cont_repr} as
	\[
		S_{\alpha}(t) x
		= \frac{1}{2\pi i} \intl_{\Gamma_I} E_{\gamma,1}{\left(z t^{\gamma}\right)} \left( z^{\beta-1} (z^\beta I + A)^{-1} - \frac{1}{z}I \right) x\, dz + x \intl_{\Gamma_I} \frac{1}{2\pi i z} E_{\gamma,1}{\left(z t^{\gamma}\right)}\, dz.
	\]
	The first integral is convergent in the strong sense for any $t \geq 0$, due to the estimate from \cref{prop:FCP_res_cor} and the assumed regularity of $x$.
	The second integral is equal to $\Res\limits_{z=0} E_{\gamma,1}{\left(z t^{\gamma}\right)}/z
		= 1$,
	which leads us to \eqref{eq:FCP_SO_MLcor_cont_repr}.
\end{proof}
Subordination-based formulas \eqref{eq:FCP_SO_MLcont_repr} and \eqref{eq:FCP_SO_MLcor_cont_repr} generalize contour representation \eqref{eq:FCP_SO_cont_repr} of $S_\alpha(t)$ beyond the case $\alpha=\beta$, without further narrowing the class of suitable operators $A$, described by the inequality $\varphi_s < \pi\min{\left\{\frac{1}{2}, \left(1 - \tfrac{\alpha}{2}\right)\right\}}$.

Moreover, the ability to vary the parameter $\beta$ in \eqref{eq:FCP_SO_MLcor_cont_repr} independently of $\alpha$ permits us to maximize the "angular size" of the region $\Theta \subset \oC$ in which the integrand is decaying super-linearly, when $z \to \infty$.
By "angular size" we refer to a limit of the argument variation within $\Theta$:
$
	\omega(\Theta) = \lim\limits_{r \to +\infty} \mathrm{diam}\left\{\Arg{(z)}: z \in \Theta \land \Re{z} \geq 0 \land |z| = r \right\}.
$
This characteristic of $\Theta$ is crucial for numerical analysis, as it is known to have a direct impact on the convergence speed of the resulting propagator approximation \cite{McLTh, Weideman2010,gm5,Sytnyk2023}.
From the theoretical point of view, explored in the proof of \cref{lem:FCP_SO_ML_repr}, any such  $\Theta$ is contained within $\Theta_m \equiv \Sigma\left(0, \phi_s\right)\setminus \Sigma\left(a_m, \phi_c\right)$, where $\phi_s = \min{\left\{\pi, {(\pi - \varphi_s)}/{\beta}\right\}}$, $\phi_c = {\pi \gamma}/{2}$ and $a_m>0$ is a certain $\Gamma_I$ -- dependent constant (see \cref{fig:FCPML_hyp_cont} (b)).
Whence, for the maximal possible angular size we get $\omega_m = \phi_s -  \phi_c$.

Additional practical considerations may lead to a further reduction of $\Theta$
and enforce $\omega(\Theta) < \omega_m$.
These include the ability to numerically evaluate
the components of \eqref{eq:FCP_SO_MLcor_cont_repr} to the required precision, discussed in \cref{rem:FCPML_OPC_omega}, or the necessity to efficiently compute $S_\alpha(t)$ for multiple values of $\alpha \in (0, \beta]$.
In the sequel, we also consider a more restricted region $\Theta_\star$, which is conformant with the corresponding region $\Theta_1$ for $S_1(t)$:
\[
	\Theta_\star = \Sigma\left(0, \phi_s\right)
	\setminus \Sigma\left(a_m, \tfrac{\pi}{2} \max{\left\{1, \tfrac{1}{\beta}\right\} }\right),
	\quad \omega_\star = \phi_s -  \tfrac{\pi}{2} \max{\left\{1, \tfrac{1}{\beta}\right\}}
\]
The inclusion $\Theta_\star \subseteq \Theta_1$ makes the sub-optimal choice $\Theta = \Theta_\star$  compatible with the commonly used resolvent evaluation techniques such as finite-element and finite-difference (FD) methods \cite{McLTh,Sytnyk2023}.
The approximation of $S_{\alpha}(t)$, discussed below, should also be compatible with other numerical methods for stationary problems, so long as the property $\mathrm{Sp}(-A) \subseteq \Sigma\left(0, \phi_s\right)$ is preserved upon discretization.

More importantly, the setting $\Theta = \Theta_\star$ allows us to entirely remove the dependence of both the resolvent and the contour in \eqref{eq:FCP_SO_MLcor_cont_repr} on the fractional order $\alpha$.
For all practical purposes it means that, after the first numerical evaluation of the propagator $S_\alpha(t)$ for some fixed $\alpha \in (0, \beta]$ and $t \in [0,T]$, the user should be able to reevaluate $S_\alpha(t)$ for any other admissible pairs $\alpha, t$ without the need to recalculate the resolvents of $A$.
The impact of this feature on a computational performance of the developed method is showcased in \cref{ex:FCPML_ex2_inv_alpha_hom_R_FD}, where we deal with identification of $\alpha$ from the set of solution measurements.

\section{Propagator discretization and quadrature}\label{sec:FCPML_Prop_Approx}
In this section, we describe how to numerically evaluate \eqref{eq:FCP_SO_MLcor_cont_repr}
via the sinc-quadrature rule on the contour $\Gamma_I$.
The pursued strategy uses the ideas from paper \cite{gm5}, devoted to the approximation of $S_1(t)$, and the results from author's previous work \cite{Sytnyk2023}, which deals with fractional propagator representation \eqref{eq:FCP_SO_cont_repr}.

We define contour $\Gamma_I$ to be an oriented hyperbolic curve
\begin{equation}\label{eq:int_cont_hyp_crit_sec}
	\Gamma_I: z(\xi) = a_0 - a_I \cosh(\xi) + i b_I \sinh(\xi), \quad \xi \in	(-\infty, \infty ),
\end{equation}
and then select the positive parameters $a_0$, $a_I$, $b_I$, in such a way that the function $z(\xi)$ conformally maps the horizontal strip $D_d$ of the complex plane:
\[
	D_d=\left \{z \in \mathbb{C}: - \infty < \Re z < \infty, |\Im z|<d \right \}.
\]
into the region $z(D_d) \subseteq \Theta$ and $\omega(z(D_d)) = \omega(\Theta)$ (see  \cref{fig:FCPML_hyp_cont}).
\begin{figure}[h!tb]
	\begin{centering}
		\ifpdf
			\begin{overpic}[width=0.7\textwidth,permil=true,viewport=110 200 550 364,clip=true]
				{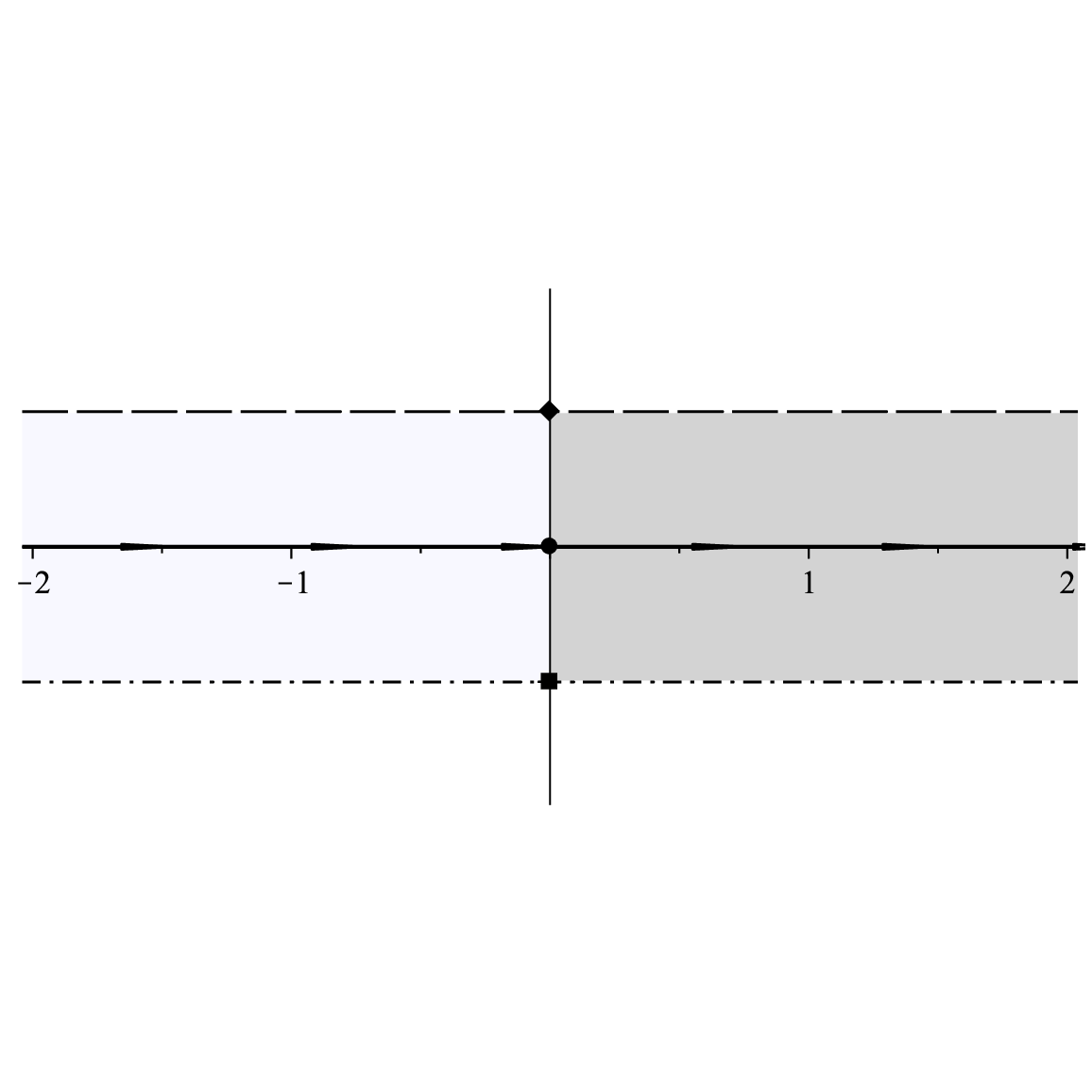}
				\put(350,290){$d$}
				\put(320,40){$-d$}
				\put(1000,190){$\xi$}
				\put(400,340){$\nu$}
				\put(0,350){\scriptsize ({\bf a})}
			\end{overpic}%
		\else
			\includegraphics[width=0.7\textwidth,permil=true,viewport=110 200 550 364,clip=true]%
			{FCP_strip_sphi_pi_6_sa_0_alpha_0_1_ac_pi_2_phic_pi_2}
		\fi
		\\[6pt]
		\ifpdf
			\begin{overpic}[width=0.7\textwidth,permil=true,viewport=61 200 471 480,clip=true]
				{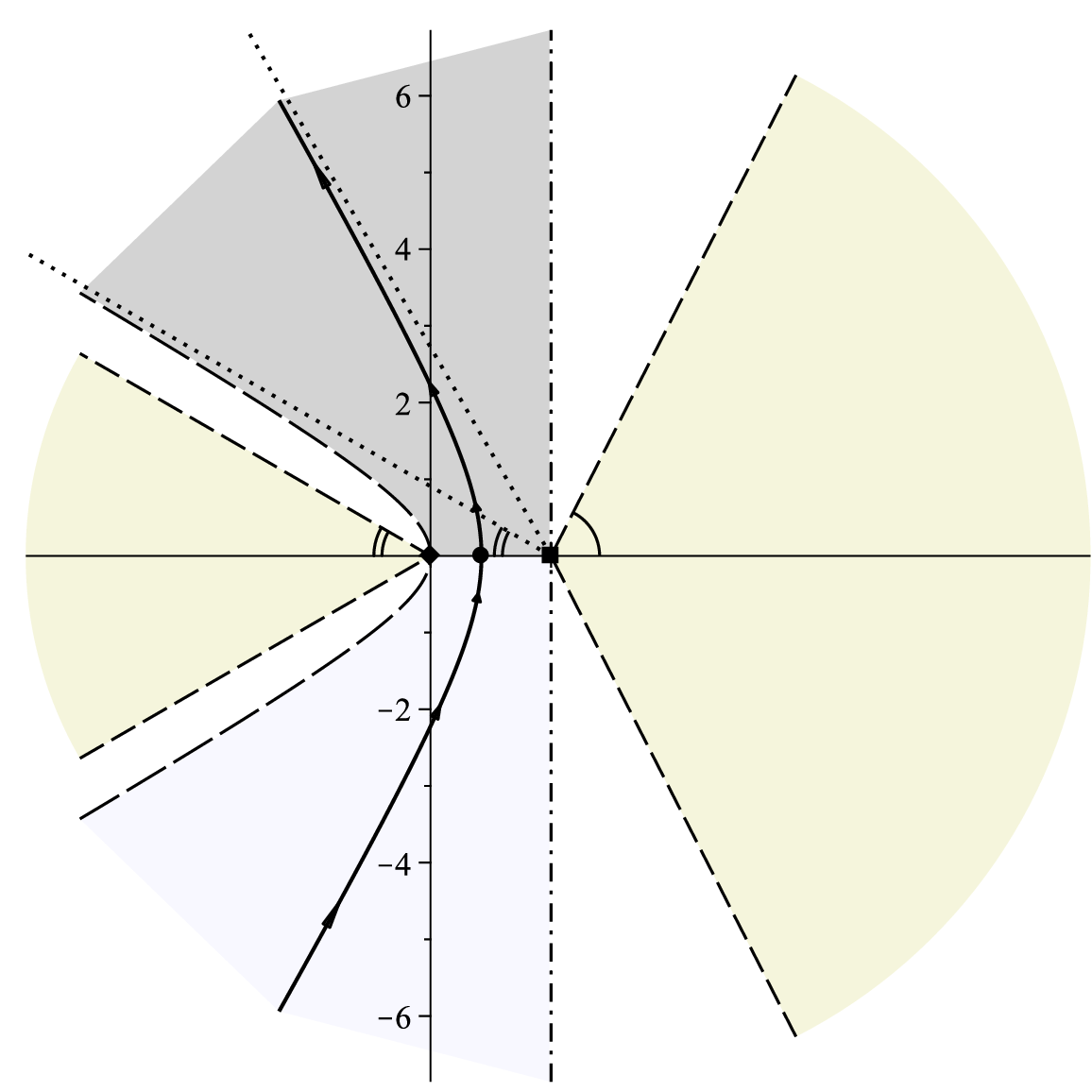}
				\put(20,140){\small $\mathrm{Sp}(-A)$}
				\put(270,195){\small $\varphi_s$}
				\put(590,210){\small $\phi_c$}
				\put(30,405){$\Gamma_s$}
				\put(200,620){$\Gamma_I$}
				\put(480,620){$\Gamma_c$}
				\put(0,660){\scriptsize ({\bf b})}
			\end{overpic}%
		\else
			\includegraphics[width=0.7\textwidth,permil=true,viewport=61 200 471 480,clip=true]
			{FCP_res_sec_cont_sphi_pi_6_sa_0_alpha_0_7_beta_1_ac_pi_2_phi_c_pi_2_n1}
		\fi
		\caption{Schematic plot of the domain $D \equiv D_d$ in which the parametrized integrand $\cF_{\alpha,1}(\beta,t,\xi)$ remains analytic and exponentially decaying for any $t \in [0, T]$ (\textbf{a}); and the image of $D_d$ under the mapping $v \to z(v)$ defined by $\Gamma_I$, along with the "forbidden" regions of complex plane indicated by ``beige'' color (\textbf{b}).
		The parameters of $\Gamma_I$: $\alpha =0.7$, $\beta=1$, $\varphi_s = {\pi}/{6}$, $\phi_c=0.35\pi$.
		}
		\label{fig:FCPML_hyp_cont}
	\end{centering}
\end{figure}
Additionally, we require that two limiting hyperbolas  $z(\xi \pm i d)$, with $d = \omega(\Theta)/2$, are asymptotically parallel to the rays forming the left and right boundary of $\Theta$.
Observe that, for any fixed $\eta \in [-d,d]$, the complex curve $z(\xi + i \eta)$, $\xi \in	(-\infty, \infty )$ is also a hyperbolic contour, feasible in the sense of  \cref{lem:FCP_SO_ML_repr}.
The described conditions on $\Gamma_I$ allows us to calculate the coefficients $a_I$, $b_I$ in the closed form. 
For the given $\alpha$, $\beta$, $\varphi_s$ and $\omega(\Theta)$, we define
\begin{equation}\label{eq:FCP_hyp_cont_par_final}
	\begin{aligned}
		a_I
		= \frac{a_0\cos{(\omega(\Theta)/2 - \phi_s)}}{\cos{\phi_s}} ,
		\quad b_I
		= \frac{a_0\sin{(\omega(\Theta)/2 - \phi_s)}}{\cos{\phi_s}},
	\end{aligned}
\end{equation}
where $a_0$ is chosen as $a_0 = {\pi}/{6}$ to ensure a reasonable separation between $\Gamma_I$ and the origin.
In Section 3.2 of \cite{Sytnyk2023} we proved that, the sinc-quadrature approximation of propagator representation \eqref{eq:FCP_SO_cont_repr}, parametrized by \eqref{eq:int_cont_hyp_crit_sec}, \eqref{eq:FCP_hyp_cont_par_final},  converges at the rate $\mathcal{O}(e^{-c\sqrt{\kappa N}})$, with $c = \sqrt{\pi \omega(\Theta)}$.
Our aim below, is to extend this result to the subordination based representation established by \cref{thm:FCP_SO_MLcor_cont_repr}.
By substituting $z = z(\xi)$, we transform formula \eqref{eq:FCP_SO_MLcor_cont_repr}  into
\begin{equation}\label{eq:FCP_SO_MLcor_repr_par}
	\begin{aligned}
		S_{\alpha}(t) x
		 & = \frac{1}{2\pi i} \intl_{-\infty}^{\infty}E_{\gamma,1}{\left(z(\xi) t^{\gamma}\right)} F_{\beta,1}(\xi)x \, d\xi + x, \\
		F_{\beta,1} (\xi)
		 & = z'(\xi)\left( z^{\beta-1}(\xi)\left (z^\beta(\xi) I + A \right )^{-1} - \frac{1}{z(\xi)}I \right),
	\end{aligned}
\end{equation}
where $z'(\xi) = -a_I\sinh(\xi) + ib_I\cosh(\xi)$.
The function $F_{\beta,1} (\xi)x$ is analytic in $D_d$ by construction of $\Gamma_I$ and, due to \cref{prop:FCP_res_cor}, its norm is exponentially decaying, when $\xi \to \infty$ in this region.
Next, for some fixed $h > 0$, $N \in \N$, we define a discretized version of \eqref{eq:FCP_SO_MLcor_repr_par}:
\begin{equation}\label{eq:FCP_SO_MLcor_sinc_quad}
	\begin{aligned}
		\wt{S}_{\alpha}^{N}(\beta, t) x
		 & = \frac{h}{2 \pi i}\sum_{k=-N}^{N} \cF_{\alpha,1} (\beta,t, kh) + x,
		\\
		\cF_{\alpha,1} (\beta, t, \xi)
		 & = E_{\gamma,1}{\left(z(\xi) t^{\gamma}\right)} F_{\beta,1} (\xi) x,
		\quad \gamma = \frac{\alpha}{\beta}.
	\end{aligned}
\end{equation}
Here, we made the dependence of $\wt{S}_{\alpha}^{N}(\beta, t)$ on $\beta$ explicit, because the value of $\beta$ has an impact on the approximation accuracy via the maximal angular size $\omega_m \geq \omega(\Theta)$.
The error of \eqref{eq:FCP_SO_MLcor_sinc_quad} admits the following two-term decomposition
\[
	\|S_{\alpha}(t)x - \wt{S}_{\alpha}^N(\beta, t)x \|
	\leq \|S_{\alpha}(t)x - \wt{S}_{\alpha}^\infty(\beta, t)x\| + \|\wt{S}_{\alpha}^\infty(\beta, t)x - \wt{S}_{\alpha}^N(\beta, t)x\|,
\]
where $\|\cdot\|$ is the norm of $X$, as before.
According to the general theory of sinc-quadrature \cite[Sec. 3.2]{Stenger1993}, the discretization error term $\|S_{\alpha}(t)x - \wt{S}_{\alpha}^\infty(\beta, t)x\|$ can be bounded as
\begin{equation}\label{eq:FCP_MLprop_discr_error}
	\left \|S_{\alpha}(t) x - \wt{S}_{\alpha,1}^\infty(\beta, t) x \right \|	\leq  \frac{e^{-\pi d/h}}{2 \sinh (\pi d/h)}\|{\cF_{\alpha,1}}(\beta, t, \cdot)\|_{{\bf H}^1(D_d)}, \quad t \in [0,T].
\end{equation}
${\bf H}^1(D_d)$ denotes the Hardy space of analytic in $D_d$ functions $\mathcal{F}: {\mathbb C} \rightarrow X$ equipped with the norm
\[
	\| {\mathcal{F}} \|_{{\bf H}^1 (D_d)}
	= 	\lim\limits_{\epsilon \rightarrow 0}\intl_{\partial D_d\left (\epsilon \right )}\|{\mathcal F}(z)\||dz| ,
\]
where
$
	D_d(\epsilon)=\{z \in \mathbb{C}:\; | \Re(z)| < 1/\epsilon, \
	|\Im(z)|<d(1-\epsilon)\}
$
and $\partial D_d(\epsilon)$ is the boundary of $ D_d(\epsilon)$.
The estimate for $\|{\cF_{\alpha,1}}(\beta, t, \cdot)\|_{{\bf H}^1(D_d)}$ is provided by the next lemma.

\begin{lemma} \label{lem:FCP_SO_MLISO_Hp_bound}
	Assume that operator $A$ and parameters $\alpha, \beta$ satisfy the conditions of \cref{lem:FCP_SO_ML_repr}.
	Then, for any $x \in D(A^\kappa)$, $\kappa > 0$, and arbitrarily small $\delta>0$
	\begin{equation}\label{eq:FCP_SO_MLcor_repr_norm_est}
		\begin{aligned}
			\|{\cF_{\alpha,1}}(\beta, t, \cdot)\|_{{\bf H}^1(D_{d-\delta})}
			 & \leq  \frac{C_{\beta,1}^\pm}{\beta \kappa } \left(1 + e^{a_m t}\right) \|A^{\kappa}x\|,
		\end{aligned}
	\end{equation}
	where  $a_m= \left(a_0  - a_0 \frac{\cos{\phi_c}}{\cos{\phi_s}}\right)^{\beta/\alpha}$ and $C_{\beta,1}^\pm > 0$ is a bounded constant that depends on $\Gamma_I, d, \delta, \beta$ and does not depend on $t \in [0, T]$.
\end{lemma}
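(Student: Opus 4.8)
The plan is to compute the Hardy-space norm straight from its definition and reduce it to line integrals over the two horizontal edges $\Im\xi=\pm(d-\delta)$ of the shrunken strip, estimating the integrand on each edge by decoupling its scalar Mittag--Leffler factor from its operator factor $F_{\beta,1}$. First I would record that $\cF_{\alpha,1}(\beta,t,\cdot)$ is analytic in $D_{d-\delta}$: since $z(\cdot)$ and $E_{\gamma,1}$ are entire and, by construction of $\Gamma_I$, the image $z(D_d)$ lies in $\Theta$ and hence avoids $\mathrm{Sp}(-A)\cup\{0\}$ together with the branch cuts of $z^{\beta-1}$ and $z^{-1}$, the factor $F_{\beta,1}(\cdot)x$ is holomorphic there, while the separation $a_0=\pi/6$ keeps $|z|$ bounded away from $0$. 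The passage to $d-\delta$ keeps us strictly inside $D_d$, so the vertical pieces of $\partial D_{d-\delta}(\epsilon)$ vanish as $\epsilon\to0$ (by the decay established below) and
\[
\|\cF_{\alpha,1}(\beta,t,\cdot)\|_{{\bf H}^1(D_{d-\delta})}
= \sum_{\pm}\intl_{-\infty}^{\infty}\|\cF_{\alpha,1}(\beta,t,\xi\pm i(d-\delta))\|\,d\xi ,
\]
the two signs yielding the two constants abbreviated by $C_{\beta,1}^\pm$.

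Next, on a fixed edge I would bound the operator factor with \cref{prop:FCP_res_cor}, which for $x\in D(A^\kappa)$ gives
\[
\|F_{\beta,1}(\xi\pm i(d-\delta))x\|
\le \frac{K(1+M)\,|z'|}{|z|\,(1+|z|^\beta)^\kappa}\,\|A^\kappa x\|,
\quad z=z(\xi\pm i(d-\delta)).
\]
From the explicit hyperbola \eqref{eq:int_cont_hyp_crit_sec}, \eqref{eq:FCP_hyp_cont_par_final} one has $|z'|/|z|=\cO(1)$ and $|z|\sim\tfrac12\sqrt{a_I^2+b_I^2}\,e^{|\xi|}$ as $|\xi|\to\infty$, so the right-hand side decays like $e^{-\beta\kappa|\xi|}$. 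Consequently the $\xi$-integral of the operator factor converges, and a change of variable $s=|z|^\beta$ turns it into $\tfrac1\beta\intl \tfrac{ds}{s(1+s)^\kappa}$, whose tail contributes a term of order $1/(\beta\kappa)$ --- this is the origin of the $\tfrac{1}{\beta\kappa}$ prefactor in \eqref{eq:FCP_SO_MLcor_repr_norm_est}, the remaining bounded part being absorbed into $C_{\beta,1}^\pm$.

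For the scalar factor I would invoke \eqref{eq:ML1_bound} with the parameter pair $(\gamma,1)$, for which the algebraic prefactor $|w|^{(1-\beta)/\alpha}$ degenerates to $1$; using $1/\gamma=\beta/\alpha$ and $\arg(zt^\gamma)=\arg z$ this reads
\[
|E_{\gamma,1}(z t^{\gamma})|
\le \frac{M_1}{1+|z t^{\gamma}|}
+ M_2\, e^{t\,\Re(z^{1/\gamma})}\,\mathbb{1}_{\{|\arg z|\le\pi\gamma\}},
\quad z=z(\xi\pm i(d-\delta)).
\]
The first term is bounded by $M_1$ uniformly in $t$, so multiplied by the exponentially decaying operator factor it integrates to the $t$-independent ``$1$'' inside $(1+e^{a_m t})$. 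The second term is active only on the bounded $\xi$-range where $|\arg z|\le\pi\gamma$ --- near the tip of the (shifted) hyperbola --- where the operator factor is bounded, so its contribution is at most a constant times $\sup e^{t\Re(z^{1/\gamma})}$, supplying the $e^{a_m t}$ term.

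The hard part will be this last supremum, namely showing
\[
\sup_{\xi\in\R,\ |\eta|\le d-\delta}\Re\!\left(z(\xi+i\eta)^{1/\gamma}\right)=a_m
=\left(a_0-a_0\,\frac{\cos\phi_c}{\cos\phi_s}\right)^{\beta/\alpha},
\]
which is the geometric core of the estimate. Here I would track the real part of the power map $z\mapsto z^{\beta/\alpha}$ over the closed family of shifted hyperbolas, and, using the closed-form coefficients \eqref{eq:FCP_hyp_cont_par_final} together with the angular data $\phi_c=\pi\gamma/2$, $\phi_s=\min\{\pi,(\pi-\varphi_s)/\beta\}$ that define $\Theta$, verify that the maximum is attained at the real-axis crossing of the extremal contour and equals the stated $a_m$; the admissibility constraint $\alpha\le\min\{\beta,2(1-\varphi_s/\pi)\}$ of \cref{lem:FCP_SO_ML_repr} is exactly what guarantees this supremum is finite. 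Combining the two edge estimates then delivers \eqref{eq:FCP_SO_MLcor_repr_norm_est}, with $C_{\beta,1}^\pm$ collecting all $\Gamma_I,d,\delta,\beta$-dependent but $t$-independent constants.
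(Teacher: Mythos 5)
Your proposal follows essentially the same route as the paper's proof: the same splitting of $\cF_{\alpha,1}$ into the scalar Mittag--Leffler factor and the operator factor $F_{\beta,1}(\cdot)x$, the same use of \cref{prop:FCP_res_cor} to extract the decay $e^{-\kappa\beta|\xi|}$ (hence the prefactor $1/(\beta\kappa)$ after integrating over the two horizontal edges), the same identification of $a_m$ with the value of $\Re z^{1/\gamma}$ at the vertex $z(-i\omega_m/2)=a_0\left(1-\cos\phi_c/\cos\phi_s\right)$ of the extremal hyperbola, and the same final integration along the boundary of the shrunken strip, with the two edges producing the two constants collected in $C_{\beta,1}^\pm$.

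There is, however, a genuine gap in your handling of the exponential part of \eqref{eq:ML1_bound}. You claim this term is ``active only on the bounded $\xi$-range where $|\arg z|\le\pi\gamma$'' and on that range you invoke only \emph{boundedness} of the operator factor. This is false on the critical edge of the strip: by construction, the edge hyperbola nearest the excluded sector is asymptotically parallel (up to an $O(\delta)$ tilt) to the rays bounding that sector, so $|\arg z(\xi-i(d-\delta))|$ tends, as $\xi\to\pm\infty$, to $\phi_c+O(\delta)=\pi\gamma/2+O(\delta)$, which is strictly smaller than $\pi\gamma$. Consequently the indicator $|\arg z|\le\pi\gamma$ stays switched on along an \emph{unbounded} portion of this edge, and an argument of the form ``length of the active range times a pointwise bound'' cannot close. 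The paper avoids this by never localizing: it establishes the uniform bound $|E_{\gamma,1}(z(w)t^\gamma)|\le M_E(1+e^{a_mt})$ on the whole strip (its inequality \eqref{eq:ML1_bound_Dd}) and multiplies it by the exponentially decaying operator bound \emph{everywhere}, yielding \eqref{eq:FCP_SO_MLcor_repr_norm_est_der} and then the claim by integration. Since you have already derived the needed decay of $F_{\beta,1}$, your proof can be repaired in exactly this way, but as written the step fails.

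The same issue makes your deferred ``hard part'' harder than your sketch suggests: because the active set is unbounded, you must control $\sup\Re z(w)^{1/\gamma}$ along the entire edge, not merely ``near the tip of the (shifted) hyperbola.'' Note also that the level sets of $\Re z^{1/\gamma}$ are generalized hyperbolas emanating from the origin, while your contours are hyperbolas centered at $a_0>0$ that are only asymptotically parallel to the boundary of the \emph{shifted} sector; lying outside a shifted sector of half-angle $\phi_c$ does not by itself force $\Re z^{1/\gamma}$ below the vertex value, so the strict inset $\delta>0$ enters this verification in an essential way. To be fair, the paper itself disposes of this point in one line (the containment $z(D_d)\subset\Theta_m$ together with the closed-form evaluation of $z(-i\omega_m/2)$), so your deferral is not worse than the source; but a complete write-up of your plan would have to supply precisely this full-edge control, uniformly in $t\in[0,T]$.
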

\begin{proof}
	To prove bound \eqref{eq:FCP_SO_MLcor_repr_norm_est} we utilize Lemma 2 from \cite{Sytnyk2023}, associated with representation \eqref{eq:FCP_SO_cont_repr} on the same contour.
	Recall that the image of $z(D_d)$, $d \in (0, \omega_m/2]$  is contained within $\Theta_m$, whose right-upper boundary is inclined at the angle greater or equal than $\phi_c \equiv \pi \gamma /2$ (see \cref{fig:FCPML_hyp_cont} (b)).
	Hence, for arbitrary $w \equiv \xi + i \nu \in D_d(\delta)$, we get $\Re{z(w)^{1/\gamma}} \leq \Re{z(-i {\omega_m}/{2})^{1/\gamma}}$, with
	\[
		\begin{aligned}
			z\left(-i \frac{\omega_m}{2}\right)
			 & = a_0 - \frac{a_0}{\cos{\phi_s}}\left(
			\cos{\left(\frac{\omega_m}{2} - \phi_s \right )} \cos{\frac{\omega_m}{2}} - \sin{\left(\frac{\omega_m}{2} - \phi_s\right)} \sin{\frac{\omega_m}{2}}
			\right)                                                                              \\
			 & = a_0 \left(1 - \frac{\cos{\left(\omega_m - \phi_s\right)} }{\cos{\phi_s}}\right)
			= a_0 \left(1 - \frac{\cos{\phi_c}}{\cos{\phi_s}}\right).
		\end{aligned}\]
	When combined with estimate \eqref{eq:ML1_bound}, the above formula yields
	\begin{equation}\label{eq:ML1_bound_Dd}
		\left| E_{\gamma,1}{\left(z(w) t^{\gamma}\right)} \right|
		\leq M_E\left (1+e^{t\Re{z(w)^{1/\gamma}}} \right )
		\leq M_E (1+e^{a_m t}),
	\end{equation}
	where $M_E = \max\{M_1, M_2\}$ and $a_m$ is defined in the lemma's premise.
	The rest of the proof is straightforward:
	we apply \eqref{eq:ML1_bound_Dd} to the first term of the inequality
	\[
		\left\| \cF_{\alpha,1} (\beta, t, w) \right\|
		\leq \left| E_{\gamma,1}{\left(z(w) t^{\gamma}\right)} \right| \left |\frac{z'(w)}{z(w)} \right | \frac{(1+M) K}{(1+|z(w)|^\beta)^\kappa} \left\|A^\kappa x\right\|,
	\]
	obtained from \eqref{eq:FCP_SO_MLcor_sinc_quad} via \eqref{eq:FCP_res_cor_norm_est}, and then use the bounds from \cite[Lem. 2]{Sytnyk2023} to estimate the remaining terms.
	This procedure yields an inequality
	\begin{equation}\label{eq:FCP_SO_MLcor_repr_norm_est_der}
		\left\| \cF_{\alpha,1} (\beta, t, w) \right\|
		\leq
		C_{\beta,1}^\pm \left(1 + e^{a_m t}\right)
		e^{- \kappa\beta|\xi|}\|A^{\kappa}x\|,
	\end{equation}
	transformed into desired bound \eqref{eq:FCP_SO_MLcor_repr_norm_est} by the integration along $\partial D_d(\delta)$.
	The constant $C_{\beta,1}^\pm= M_E\left(C_{\beta,1}(\kappa, \delta -d) + C_{\beta,1}(\kappa, d - \delta )\right)$ in formulas \eqref{eq:FCP_SO_MLcor_repr_norm_est} and \eqref{eq:FCP_SO_MLcor_repr_norm_est_der} is defined as
	\begin{equation}\label{eq:FCP_SO_exp_cor_repr_norm_const}
		C_{\beta,1}(\kappa,\nu) =
		\frac{K_1 b( \nu)}{ \left(a( \nu) - a_0\right) r_0^\kappa(\nu)}, \quad r_0(\nu)  = \inf\limits_{\xi \in \R} r(\xi, \nu).
	\end{equation}
	Where $K_1 > 0$ and $r(\xi, \nu)$ is the solution of equation $1+|z(w)|^\beta = r(\xi,\nu)\cosh^\beta{\xi}$.
	Meanwhile, $a(\nu)$ and $b(\nu)$ denote the pair of coefficients for the parametric family of hyperbolas $\Gamma(\nu)  = \{a_0 - a(\nu) \cosh{\xi} + ib(\nu) \sinh{\xi}: \; \xi\in(-\infty,\infty)\}$, obtained from \eqref{eq:int_cont_hyp_crit_sec} by the argument substitution $\xi = \xi + i \nu$:
	\begin{equation}\label{eq:FCPML_ab_mu}
		a(\nu) = a_I \cos{\nu}+b_I\sin{\nu},	 \quad
		b(\nu) = b_I \cos{\nu}-a_I\sin{\nu}.
	\end{equation}
\end{proof}
Next, we estimate the truncation error $\|\wt{S}_{\alpha}^\infty(\beta, t)x - \wt{S}_{\alpha}^N(\beta, t)x\|$ of \eqref{eq:FCP_SO_MLcor_sinc_quad}.
\begin{lemma}\label{lem:FCP_SO_MLISO_trunc_bound}
	Assume that operator  $A$ and parameters $\alpha, \beta$ satisfy the conditions of \cref{lem:FCP_SO_ML_repr}.
	Then, for any $x \in D(A^\kappa)$, $\kappa>0$, the truncation error of $(2N+1)$-term approximations \eqref{eq:FCP_SO_MLcor_sinc_quad} with the step-size $h > 0$ satisfies the estimate
	\begin{equation}\label{eq:FCP_SO_MLcor_repr_cont_est}
		\|\wt{S}_{\alpha}^\infty(\beta, t)x - \wt{S}_{\alpha}^N(\beta, t)x\|
		\leq  \frac{M_E C_{\beta,1} (\kappa,0)(1 + e^{a_m t})}{\pi \kappa \beta}e^{-\kappa\beta Nh}\|A^{\kappa}x\|,
	\end{equation}
	where $M_E$, $a_m$ and $C_{\beta, 1}(\kappa, \nu)$ are defined by  \cref{lem:FCP_SO_MLISO_Hp_bound}.
\end{lemma}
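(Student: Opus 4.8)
The plan is to observe that the truncation error is simply the tail of the bi-infinite sinc sum and to control it through the pointwise bound on the integrand $\cF_{\alpha,1}(\beta,t,\xi)$ already obtained in the proof of \cref{lem:FCP_SO_MLISO_Hp_bound}, now specialized to the real axis. Since $\wt{S}_{\alpha}^\infty(\beta,t)x$ and $\wt{S}_{\alpha}^N(\beta,t)x$ share the additive term $x$ and differ only in the summation range (by the definition \eqref{eq:FCP_SO_MLcor_sinc_quad}), I would first write
\[
	\wt{S}_{\alpha}^\infty(\beta,t)x - \wt{S}_{\alpha}^N(\beta,t)x = \frac{h}{2\pi i}\suml_{|k|>N}\cF_{\alpha,1}(\beta,t,kh),
\]
and then move the norm inside the sum via the triangle inequality, bounding the left-hand side by $\frac{h}{2\pi}\suml_{|k|>N}\|\cF_{\alpha,1}(\beta,t,kh)\|$.

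The central step is a pointwise decay estimate at the sampling nodes $\xi = kh \in \R$. Here I would reuse the exact estimation chain from \cref{lem:FCP_SO_MLISO_Hp_bound} --- bounding $|E_{\gamma,1}(z(\xi)t^\gamma)|$ by $M_E(1+e^{a_m t})$ as in \eqref{eq:ML1_bound_Dd}, and the residual resolvent-and-Jacobian factor through \cref{prop:FCP_res_cor} together with the hyperbola estimates of \cite[Lem. 2]{Sytnyk2023} --- but evaluated at $\nu = 0$ rather than on the strip boundary. This produces
\[
	\|\cF_{\alpha,1}(\beta,t,\xi)\| \leq M_E\, C_{\beta,1}(\kappa,0)\,(1+e^{a_m t})\,e^{-\kappa\beta|\xi|}\,\|A^\kappa x\|,
\]
where the single-argument constant $C_{\beta,1}(\kappa,0)$ is read off from \eqref{eq:FCP_SO_exp_cor_repr_norm_const} by setting $\nu=0$, so that $a(0)=a_I$ and $b(0)=b_I$.

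It then remains to sum the geometric tail. Factoring out the $t$- and $x$-dependent constants and using the symmetry of the decay, the residual sum becomes $2h\suml_{k=N+1}^\infty e^{-\kappa\beta kh}$. Because $e^{-\kappa\beta s}$ is monotonically decreasing, $h\,e^{-\kappa\beta kh}\leq\intl_{(k-1)h}^{kh}e^{-\kappa\beta s}\,ds$ for each $k$, so the tail is dominated by $\intl_{Nh}^{\infty}e^{-\kappa\beta s}\,ds = (\kappa\beta)^{-1}e^{-\kappa\beta Nh}$. Combining the prefactor $\frac{h}{2\pi}$ with the factor $2$ from symmetry and $(\kappa\beta)^{-1}$ from the integral reproduces the asserted constant $\frac{M_E C_{\beta,1}(\kappa,0)(1+e^{a_m t})}{\pi\kappa\beta}$ and the decay rate $e^{-\kappa\beta Nh}$.

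The only genuinely delicate point --- hence the main obstacle --- is confirming that the pointwise estimate of \cref{lem:FCP_SO_MLISO_Hp_bound} transfers cleanly to $\nu=0$ with the correct constant $C_{\beta,1}(\kappa,0)$; in particular one should check that the infimum $r_0(0)$ appearing in \eqref{eq:FCP_SO_exp_cor_repr_norm_const} stays finite and positive on the real hyperbola, which follows from strong positivity of $A$ since $1+|z(\xi)|^\beta$ is bounded away from zero uniformly in $\xi$. Everything downstream is the routine geometric-tail bound standard in sinc-quadrature analysis \cite[Sec. 3.2]{Stenger1993}.
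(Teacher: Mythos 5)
Your proposal is correct and follows essentially the same route as the paper's proof: express the truncation error as the tail sum $\frac{h}{2\pi}\bigl\|\sum_{|k|>N}\cF_{\alpha,1}(\beta,t,kh)\bigr\|$, apply the pointwise decay bound $M_E C_{\beta,1}(\kappa,0)(1+e^{a_m t})e^{-\kappa\beta|\xi|}\|A^\kappa x\|$ on the real axis, and sum the tail; your integral-comparison bound $h\sum_{k=N+1}^{\infty}e^{-\kappa\beta kh}\leq (\kappa\beta)^{-1}e^{-\kappa\beta Nh}$ is just a cleaner packaging of the paper's geometric-series estimate and yields the identical constant. Your extra care in checking that the constant transfers to $\nu=0$ (via $r_0(0)>0$) is a point the paper passes over silently, but it is not a substantive divergence.
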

\begin{proof}
	We proceed by applying estimate \eqref{eq:FCP_SO_MLcor_repr_norm_est_der} to the left-hand side of \eqref{eq:FCP_SO_MLcor_repr_cont_est}
	\[
		\begin{aligned}
			\frac{h}{2\pi}\left \|\sum_{|k|>N}\!\!\cF_{\beta,1}\left (\beta, t, kh\right ) \right \|
			\leq \frac{h}{\pi}M_E C_{\beta,1} (\kappa,0) \left(1 + e^{a_m t} \right)
			\sum_{k=N+1}^{\infty} 	e^{- \kappa \beta k h}\|A^{\kappa}x\|
			\\
			\leq \frac{h M_E C_{\beta,1} (\kappa,0)(1 + e^{a_m t})}{\pi (1 - e^{-\kappa \beta h})e^{\kappa \beta (N+1) h}}\|A^{\kappa}x\|
			\leq \frac{M_E C_{\beta,1} (\kappa,0)(1 + e^{a_m t})}{\pi \kappa \beta e^{\kappa \beta N h}}\|A^{\kappa}x\|.
		\end{aligned}
	\]
\end{proof}
Now, we are fully equipped to formulate the main result regarding the accuracy of the subordination based approximation $\wt{S}_{\alpha}^N(\beta, t)$, defined by \eqref{eq:FCP_SO_MLcor_sinc_quad}.
\begin{theorem}\label{thm:FCPML_prop_appr}
	Let $A$ be a strongly positive operator with the domain $D(A)$ and the spectrum $\mathrm{Sp}(A) \subset \Sigma(\rho_s, \varphi_s)$,  $\rho_s>0$, $\varphi_s < \pi/{2}$.
	Then, for any $t \in [0, T]$, $x \in D(A^\kappa)$, $\kappa > 0$, and  $\alpha, \beta \in (0,2)$, such that $0< \alpha \leq \min{\left\{\beta, 2 \left (1 - {\varphi_s}/{\pi} \right) \right\} }$, the error of sinc quadrature--based approximation
	$\wt{S}_{\alpha,1}^N(\beta, t)x$, $N \in \N$  satisfies the bound
	\begin{equation}\label{eq:FCP_SO_MLcor_err_est}
		\left \|S_\alpha(t)x - \wt{S}_{\alpha}^N(\beta, t)x \right \| \leq
		{C_1} \frac{\left(1 + e^{a_m t}\right)}{ \kappa \beta }	\exp{\left(-c\sqrt{\kappa\beta N}\right)}
		\|A^{\kappa}x\|,
	\end{equation}
	with $a_m= \left(a_0  - a_0 \frac{\cos{\phi_c}}{\cos{\phi_s}}\right)^{\beta/\alpha}$,  $\phi_c = \frac{\pi\alpha}{2\beta}$, $\phi_s = \min\{\pi, \frac{\pi -\varphi_s}{\beta}\}$ and $c=\sqrt{\pi \omega(\Theta)}$, provided that the step-size $h$ in  \eqref{eq:FCP_SO_MLcor_sinc_quad} is chosen as	$h=\sqrt{\frac{\pi \omega(\Theta)}{\kappa \beta N}}$,
	Here, $a_0 > 0$ and $\omega(\Theta) \in (0, \omega_m]$ are the given contour shift  and angular size parameters from \eqref{eq:int_cont_hyp_crit_sec} and \eqref{eq:FCP_hyp_cont_par_final}, correspondingly.
	The constant $C_1$ in \eqref{eq:FCP_SO_MLcor_err_est} is independent of $t,N$.
\end{theorem}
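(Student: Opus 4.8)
The plan is to assemble the two-term error decomposition already displayed just before \eqref{eq:FCP_MLprop_discr_error} and to balance its constituents through an appropriate choice of the step-size $h$. Both constituents have already been estimated: the quadrature (discretization) error is controlled by combining the abstract sinc estimate \eqref{eq:FCP_MLprop_discr_error} with the Hardy-space bound of \cref{lem:FCP_SO_MLISO_Hp_bound}, while the truncation error is supplied directly by \cref{lem:FCP_SO_MLISO_trunc_bound}. Thus the core of the argument is purely the optimization that equalizes the two resulting exponential rates and the bookkeeping needed to collect the constants.

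First I would treat the discretization term. Since \cref{lem:FCP_SO_MLISO_Hp_bound} bounds the Hardy norm on the slightly shrunk strip $D_{d-\delta}$ rather than on $D_d$, I would apply \eqref{eq:FCP_MLprop_discr_error} with $d$ replaced by $d-\delta$, obtaining
\[
\left\|S_\alpha(t)x - \wt{S}_\alpha^\infty(\beta,t)x\right\|
\leq \frac{e^{-\pi(d-\delta)/h}}{2\sinh\!\left(\pi(d-\delta)/h\right)}\,\frac{C_{\beta,1}^\pm}{\beta\kappa}\left(1+e^{a_m t}\right)\|A^\kappa x\|.
\]
For small $h$ one has $\frac{e^{-\pi(d-\delta)/h}}{2\sinh(\pi(d-\delta)/h)} \leq C\,e^{-2\pi(d-\delta)/h}$, and since the construction of $\Gamma_I$ fixes $d = \omega(\Theta)/2$, the discretization exponent equals $\pi(\omega(\Theta)-2\delta)/h$. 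The truncation term from \cref{lem:FCP_SO_MLISO_trunc_bound} carries the exponent $\kappa\beta N h$, with the identical prefactor structure $(1+e^{a_m t})/(\kappa\beta)\,\|A^\kappa x\|$.

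Next I would balance the two exponents. Equating $\pi(\omega(\Theta)-2\delta)/h = \kappa\beta N h$ yields $h = \sqrt{\pi(\omega(\Theta)-2\delta)/(\kappa\beta N)}$, and substituting this back turns both exponentials into $\exp\!\left(-\sqrt{\pi(\omega(\Theta)-2\delta)}\sqrt{\kappa\beta N}\right)$. Letting $\delta \to 0$ recovers the stated step-size $h=\sqrt{\pi\omega(\Theta)/(\kappa\beta N)}$ and the rate constant $c=\sqrt{\pi\omega(\Theta)}$. Adding the two contributions, the common prefactor $(1+e^{a_m t})/(\kappa\beta)\,\|A^\kappa x\|$ factors out, and all remaining quantities — $M_E$, $C_{\beta,1}^\pm$, $C_{\beta,1}(\kappa,0)$, and the numerical factors from the two estimates — are independent of $t$ and $N$ and can be absorbed into a single constant $C_1$, giving exactly \eqref{eq:FCP_SO_MLcor_err_est}.

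The only genuinely delicate point is the passage $\delta \to 0$: the constant $C_{\beta,1}(\kappa,\nu)$ in \eqref{eq:FCP_SO_exp_cor_repr_norm_const} degenerates as the evaluation strip approaches $\partial D_d$ (through $r_0(\nu)$ and $b(\nu)$ in \eqref{eq:FCPML_ab_mu}), so the prefactor deteriorates precisely in the limit that sharpens the exponent. I would therefore either keep $\delta$ fixed and state the bound with the slightly reduced rate $\sqrt{\pi(\omega(\Theta)-2\delta)}$, or, to obtain the clean constant $c=\sqrt{\pi\omega(\Theta)}$, fix $\delta$ as a small fraction of $\omega(\Theta)$ and absorb the resulting $\delta$-dependence into $C_1$, noting that $C_1$ still remains independent of $t$ and $N$ as required. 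Tracking the $1/(\kappa\beta)$ scaling consistently through both lemma bounds is the remaining routine detail.
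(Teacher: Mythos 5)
Your proposal takes essentially the same route as the paper's proof: the identical two-term decomposition, with the discretization error controlled by \eqref{eq:FCP_MLprop_discr_error} combined with the Hardy-norm bound of \cref{lem:FCP_SO_MLISO_Hp_bound}, the truncation error taken from \cref{lem:FCP_SO_MLISO_trunc_bound}, and the same equating of the two exponents to obtain $h=\sqrt{\pi\omega(\Theta)/(\kappa\beta N)}$, $c=\sqrt{\pi\omega(\Theta)}$, and $C_1 = c_0 C_{\beta,1}^\pm + M_E C_{\beta,1}(\kappa,0)$. Your handling of the strip parameter $\delta$ is in fact slightly more scrupulous than the paper's, which applies the sinc estimate on the full strip $D_d$ while quoting the Hardy bound on $D_{d-\delta}$ and silently absorbs the $\delta$-dependence into $C_{\beta,1}^\pm$; this is a refinement of, not a departure from, the published argument.
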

\begin{proof}
	In order to estimate $\left \|S_\alpha(t)x - \wt{S}_{\alpha,1}^N(\beta, t)x \right \|$ we combine discretization error bounds \eqref{eq:FCP_MLprop_discr_error}, \eqref{eq:FCP_SO_MLcor_repr_norm_est} with the truncation error bound given by \eqref{eq:FCP_SO_MLcor_repr_cont_est}:
	\[
		\begin{aligned}
			\|S_{\alpha}(t)x - \wt{S}_{\alpha}^N(\beta, t)x  \|
			 & \leq \frac{\left(1 + e^{a_m t}\right)}{ \kappa \beta }  \left( \frac{C_{\beta,1}^\pm e^{-\frac{\pi d}{h}}}{2 \sinh{\frac{\pi d}{h}}} +  \frac{M_E C_{\beta,1} (\kappa,0)}{\pi }e^{-\kappa\beta Nh}\right)\|A^{\kappa}x\| \\
			 & \leq  \frac{\left(1 + e^{a_m t}\right)}{ \kappa \beta } \left(\frac{c_0 C_{\beta,1}^\pm }{e^{2\frac{\pi d}{h}}} + \frac{M_EC_{\beta,1} (\kappa,0)}{\pi e^{\kappa\beta Nh}}  \right)
			\|A^{\kappa}x\|,
		\end{aligned}
	\]
	and then equate the arguments of two exponents inside the brackets.
	This yields the expression $h =\sqrt{\frac{\pi \omega(\Theta)}{\kappa \beta N}}$.
	After back-substitution of $h$ into the last estimate we finally arrive at \eqref{eq:FCP_SO_MLcor_err_est}, with $C_1 = c_0 C_{\beta,1}^\pm  + M_E C_{\beta,1} (\kappa,0)$ and  $c_0 = \left(1-\e^{-c\sqrt{\kappa\beta}} \right)^{-1}$.
\end{proof}

The reader should note that the principal part of error estimate \eqref{eq:FCP_SO_MLcor_err_est}  has $\beta$ in its argument, whereas the convergence rate of the former method, stemming from \eqref{eq:FCP_SO_cont_repr}, is determined by $\alpha$ (see \cite[Thm. 2]{Sytnyk2023}).
This implies that, the approximation quality $\wt{S}_{\alpha}^N(\beta, t)$ will not degrade like in \cite{Sytnyk2023}, when $\alpha$ decreases.
Furthermore, the convergence speed should increase in the case of the maximal angular size: $\omega(\Theta) = \phi_s - \phi_c$, since $\phi_c \to 0$, as $\alpha \to 0$.
If one, instead, sets $\omega(\Theta)$ to the sub-optimal value $\omega_\star = \phi_s -  \tfrac{\pi}{2} \max{\left\{1, \tfrac{1}{\beta}\right\}}$,
then the convergence order in \eqref{eq:FCP_SO_MLcor_err_est} remains fixed for all $\alpha \in (0, \beta]$.
The same is true regarding the contour $\Gamma_I$, which is invariant with respect to $\alpha$, if $\omega(\Theta) = \omega_\star$.
This fact theoretically justifies the possibility of the resolvent reuse in the scenario with evaluation of the  solution for multiple $\alpha,t$, mentioned at the end of \cref{sec:FCPML_subordinated_approximation}.
Another essential feature of $\wt{S}_{\alpha}^N(\beta, t)$, $\alpha \in (0,2)$, is its dependence only on the scalar values of the Mittag-Leffler function $E_{\gamma,1}(s)$, with $s \in \Gamma_I$, $\gamma \leq 1$.
Efficient numerical methods for the evaluation of such functions from \cite{Garrappa2015,McLean2021} are reliant upon the contour representation of  $E_{\gamma,1}(s)$ and the robust singularity identification, which are currently tractable only for $\gamma \leq 1$.
Notwithstanding the above distinctions, we emphasize that approximation \eqref{eq:FCP_SO_MLcor_sinc_quad} is the proper extension of the one from \cite{Sytnyk2023}.
Hence, these two approximations can be used interchangeably in applications.
We will take advantage of this observation in the next section, where the current method is applied to  the mild solution of \eqref{eq:FCP_DEBC}.

\section{Applications and Numerical examples}\label{sec:FCPML_Applications_Examples}
In this section, we show how to incorporate the new propagator approximation formula into the numerical solution scheme for problem \eqref{eq:FCP_DEBC} developed in \cite{Sytnyk2023}.
This process requires only nonessential modifications, mostly limited to the second-initial-condition part of \eqref{eq:FCP_InhomSol_rep}.

We begin by stating a useful identity for $E_{\gamma,1}(z)$:
\[
	\intl_{0}^t E_{\gamma,1}{\left(z \left (t-s \right )^{\gamma}\right)}\, ds
	= t E_{\gamma,2}{\left(z t^{\gamma}\right)}
	= \frac{t^{1-\gamma}}{z} \left (E_{\gamma,2-\gamma}{\left(z t^{\gamma}\right)}-\frac{1}{\Gamma(2-\gamma)} \right ),
\]
derived with help of (4.2.3) and (4.4.4) from \cite{Gorenflo2020}.
The strong continuity of $S_\alpha(t)$ permits us to apply Fubini's theorem \cite[Thm. 8.7]{Lang2012} to the contour integral representation of $\intl_{0}^t \!\! S_\alpha(t-s) u_1\, ds$  obtained by means of  \eqref{eq:FCP_SO_MLcont_repr}, and, then, make use of the above identity for $E_{\gamma,1}(z)$.
As a result, we get
\[
	\intl_{0}^t \!\! S_\alpha(t-s) u_1\, ds
	= \frac{t^{1-\gamma} }{2 \pi i } \left( \ \intl_{\Gamma_I}\!\! E_{\gamma,2-\gamma}{\left(z t^{\gamma}\right)}  \frac{(z^\beta I + A)^{-1}u_1} {z^{2-\beta}} \, dz -  \intl_{\Gamma_I}  \frac{(z^\beta I + A)^{-1}u_1} {\Gamma(2-\gamma)z^{2-\beta}} \, dz \!\right).
\]
The second integral inside the brackets is equal to zero by Corollary 1 from \cite{Sytnyk2023b}.
Consequently, the homogeneous part $u_\mathrm{h}$ of \eqref{eq:FCP_InhomSol_rep}
can be rewritten as
\[
	u_\mathrm{h} = S_\alpha(t) u_0 + S_{\alpha,2}(t) u_1.
\]
Where $S_\alpha(t) u_0$ is expressed by \eqref{eq:FCP_SO_MLcor_cont_repr}  and $S_{\alpha,2}(t) u_1$ is defined by
\begin{equation}\label{eq:FCP_ISO_ML_cont_repr}
	S_{\alpha,2}(t) u_1
	= \frac{t^{1-\gamma} }{2 \pi i } \intl_{\Gamma_I}  E_{\gamma,2-\gamma}{\left(z t^{\gamma}\right)} z^{\beta-2} (z^\beta I + A)^{-1}u_1 \, dz,
\end{equation}
with $0 < \alpha \leq \beta < 2$, as before.
Bounds \eqref{eq:ResSector} and \eqref{eq:ML1_bound} stipulate a quadratic decay of the integrand from \eqref{eq:FCP_ISO_ML_cont_repr} in the region $\Theta_m$.
Hence, the operator function $S_{\alpha,2}(t) u_1$ is well-defined for any $u_1 \in X$ and, after parametrization $z= z(\xi)$, its integral representation can be discretized analogously to \eqref{eq:FCP_SO_MLcor_repr_par}.

For any fixed $h \in \R$, $N \in \N$, we define
\begin{equation}\label{eq:FCP_ISO_exp_cor_repr_par}
	\begin{aligned}
		\wt{S}_{\alpha,2}^N(\beta, t) u_1
		 & = \frac{h }{2 \pi i}\sum_{k=-N}^{N} \cF_{\alpha,2} (\beta, t, kh),
		\quad \gamma = \frac{\alpha}{\beta},                                                                                         \\
		\cF_{\alpha,2} (\beta, t, \xi)
		 & = t^{1-\gamma}  E_{\gamma,2-\gamma}{\left(z(\xi) t^{\gamma}\right)}  z'(\xi)z^{\beta-2}(\xi)(z^\beta(\xi)I + A)^{-1} u_1,
	\end{aligned}
\end{equation}
where $z(\xi)$ is determined by \eqref{eq:int_cont_hyp_crit_sec}, \eqref{eq:FCP_hyp_cont_par_final} and the user-chosen $a_0 > 0$, $\omega(\Theta) \in (0, \omega_m]$.

\begin{theorem}\label{thm:FCPML_hom_sol_appr}
	Assume that operator  $A$ and parameters $\alpha$, $\beta$, $a_0$, $a_m$,  $c$, $\omega(\Theta)$ are defined as in \cref{thm:FCPML_prop_appr}.
	Then, for any $u_0 \in D(A^\kappa)$, $\kappa>0$ and $u_1 \in X$, the approximate solution $\wt{u}_{\mathrm{h}}^N(t) = \wt{S}_{\alpha}^{N_1}(\beta, t) u_0 + \wt{S}_{\alpha,2}^{N_2}(\beta, t) u_1$, with $N_1 = N$, $N_2 = \lceil \kappa \beta N\rceil$, converges to the homogeneous part of mild solution  \eqref{eq:FCP_InhomSol_rep} and the following error bound is valid
	\begin{equation}\label{eq:FCP_hom_sol_err_est}
		\left\| u_{\mathrm{h}}(t) - \wt{u}_\mathrm{h}^N(t) \right\|  \leq {C_\kappa}{(1+ t^{1-\gamma}) (1+e^{a_m t}) }
		\exp{\left(-c\sqrt{\kappa\beta N}\right)}
		\|A^{\kappa}u_0\|,
	\end{equation}
	provided that the step-sizes in \eqref{eq:FCP_SO_MLcor_sinc_quad} and \eqref{eq:FCP_ISO_exp_cor_repr_par} are set as $h_1 = h_2 =\sqrt{\frac{\pi \omega(\Theta)}{\kappa \beta N}}$.
	The constant $C_{\kappa}$ is dependent on $A$, $u_0$, $u_1$, and independent of $t,N$.
\end{theorem}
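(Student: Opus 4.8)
The plan is to split the error by the triangle inequality along the two constituents of $u_{\mathrm h}$,
\[
\left\| u_{\mathrm h}(t) - \wt u_{\mathrm h}^N(t)\right\|
\leq \left\| S_\alpha(t)u_0 - \wt S_\alpha^{N_1}(\beta,t)u_0\right\|
+ \left\| S_{\alpha,2}(t)u_1 - \wt S_{\alpha,2}^{N_2}(\beta,t)u_1\right\|,
\]
and to bound the two summands separately. The first one is already covered: with $N_1=N$ and $h_1=\sqrt{\pi\omega(\Theta)/(\kappa\beta N)}$, \cref{thm:FCPML_prop_appr} delivers a bound of order $(1+e^{a_m t})\exp(-c\sqrt{\kappa\beta N})\|A^\kappa u_0\|$, which already matches the target rate in \eqref{eq:FCP_hom_sol_err_est}. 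It therefore remains to furnish an analogous estimate for the second-initial-condition term $\wt S_{\alpha,2}^{N_2}$, after which the two contributions are simply added.

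For the second summand I would replicate the three-step scheme used for $\wt S_\alpha^N$. Since $\Gamma_I$ is built so that $z^\beta$ is positively oriented with respect to $\mathrm{Sp}(-A)\cup\{0\}$, the parametrized integrand $\cF_{\alpha,2}(\beta,t,\xi)$ from \eqref{eq:FCP_ISO_exp_cor_repr_par} is analytic in the strip $D_d$. To control its ${\bf H}^1(D_{d-\delta})$ norm I would bound the scalar Mittag--Leffler factor $E_{\gamma,2-\gamma}(z(w)t^\gamma)$ by \eqref{eq:ML1_bound} exactly as in \eqref{eq:ML1_bound_Dd} of \cref{lem:FCP_SO_MLISO_Hp_bound}; the only new point is that for the second index $2-\gamma\in[1,2]$ the pre-exponential power becomes $|z t^\gamma|^{(\gamma-1)/\gamma}$, whose singular $t$-dependence is precisely absorbed by the prefactor $t^{1-\gamma}$, because $t^{1-\gamma}|zt^\gamma|^{(\gamma-1)/\gamma}=|z|^{(\gamma-1)/\gamma}$ stays bounded owing to $|z(\xi)|\geq a_0>0$ on $\Gamma_I$. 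The operator factor is estimated here directly from the resolvent bound \eqref{eq:ResSector} rather than from the corrected \cref{prop:FCP_res_cor}, since no $-\frac1z I$ subtraction is present: the product $z'(\xi)z^{\beta-2}(\xi)(z^\beta(\xi)I+A)^{-1}$ decays like $|z(\xi)|^{-1}$, i.e.\ like $e^{-|\xi|}$ along the hyperbola, which is the quadratic decay of the integrand noted after \eqref{eq:FCP_ISO_ML_cont_repr}. Collecting the factors gives a pointwise bound $\|\cF_{\alpha,2}(\beta,t,w)\|\leq C_{\beta,2}^\pm\, t^{1-\gamma}(1+e^{a_m t})e^{-|\xi|}\|u_1\|$ and, upon integration along $\partial D_{d-\delta}$, the associated ${\bf H}^1$ estimate; substituting it into \eqref{eq:FCP_MLprop_discr_error} yields a discretization error of order $t^{1-\gamma}(1+e^{a_m t})e^{-\pi d/h_2}$.

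The truncation part follows the pattern of \cref{lem:FCP_SO_MLISO_trunc_bound}: summing the geometric tail of $e^{-|\xi|}$ produces $\|\wt S_{\alpha,2}^\infty - \wt S_{\alpha,2}^{N_2}\|\lesssim t^{1-\gamma}(1+e^{a_m t})e^{-N_2 h_2}\|u_1\|$. Here lies the one genuinely new ingredient and the main obstacle: $\cF_{\alpha,2}$ decays at the fixed rate $1$ in $\xi$, whereas $\cF_{\alpha,1}$ decays at the $\kappa$-dependent rate $\kappa\beta$, so under a common step-size the two truncation errors are a priori mismatched; this is compensated by taking $N_2=\lceil\kappa\beta N\rceil$, so that $e^{-N_2 h}\leq e^{-\kappa\beta N h}$ reproduces the first term's tail. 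With the common choice $h_1=h_2=h=\sqrt{\pi\omega(\Theta)/(\kappa\beta N)}$ the discretization exponents become $e^{-\pi\omega(\Theta)/h}=e^{-c\sqrt{\kappa\beta N}}$ and both truncation exponents reduce to $e^{-c\sqrt{\kappa\beta N}}$. Adding the two summands, factoring out the common $(1+e^{a_m t})\exp(-c\sqrt{\kappa\beta N})$, and bounding $1$ and $t^{1-\gamma}$ by $(1+t^{1-\gamma})$ then gives \eqref{eq:FCP_hom_sol_err_est}, with $C_\kappa$ absorbing $C_1$, $C_{\beta,2}^\pm$ and the norm ratio $\|u_1\|/\|A^\kappa u_0\|$ (hence its stated dependence on $u_0,u_1$). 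The only remaining care-point is the edge case $\gamma=1$, where \eqref{eq:ML1_bound} does not formally apply; there $E_{1,2-\gamma}=E_{1,1}=\exp$ is bounded on $\Gamma_I$ by the same contour construction, so the estimate persists.
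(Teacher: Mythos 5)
Your proposal is correct and follows essentially the same route as the paper's proof: split the error via the triangle inequality, invoke \cref{thm:FCPML_prop_appr} for the $u_0$ term, re-run the Hardy-norm and truncation machinery of \cref{lem:FCP_SO_MLISO_Hp_bound,lem:FCP_SO_MLISO_trunc_bound} for $\cF_{\alpha,2}$ using the plain resolvent bound (no $\tfrac{1}{z}I$ correction needed), and compensate the decay-rate mismatch ($1$ versus $\kappa\beta$) by taking $N_2=\lceil\kappa\beta N\rceil$ before balancing the exponents, exactly as the paper does. Your explicit absorption of the Mittag--Leffler pre-exponential factor $|zt^{\gamma}|^{(\gamma-1)/\gamma}$ into the prefactor $t^{1-\gamma}$, and the remark on the edge case $\gamma=1$, are details the paper leaves implicit (it cites Lemma 2 of \cite{Sytnyk2023} instead of rederiving them), but the substance of the argument is identical.
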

\begin{proof}
	The error of the propagator approximation $\wt{S}_{\alpha}^{N_1}(\beta, t) u_0$ was characterized by \cref{thm:FCPML_prop_appr}.
	Here, we focus on deriving an estimate for $\left\|S_{\alpha,2}(t) u_1 - \wt{S}_{\alpha,2}^{N}(\beta, t) u_1\right\|$, using the technique adopted in the proofs of \cref{lem:FCP_SO_MLISO_Hp_bound,lem:FCP_SO_MLISO_trunc_bound}.
	Bearing this goal in mind, we take a closer look at the estimate
	\[
		\left\| \cF_{\alpha,2} (\beta, t, w) \right\|
		\leq M_E t^{1-\gamma}  (1+e^{a_m t}) \left |\frac{z'(w)}{z^2(w)} \right |  \frac{|z(w)|^{\beta} }{1+|z(w)|^\beta}
		\left\| u_1\right\|, \quad w \equiv \xi + i \nu \in D_d(\delta),
	\]
	obtained by applying \eqref{eq:FCP_res_cor_norm_est}, \eqref{eq:ML1_bound_Dd} to the norm of $\cF_{\alpha,2} (\beta, t, w)$.
	For any $d \in (0, \omega_m/2]$ and $\delta \in [0,d]$, this estimate differs from the corresponding estimate for $\|\cF_{\beta,2} (t, w)\|$, obtained in the proof of Lemma 2 from \cite{Sytnyk2023}, only by the factor $M_E  t^{1-\gamma}   (1+e^{a_m t})e^{-a_0 t}$.
	This fact allows us to apply the cited lemma directly, leading to the bounds
	\begin{eqnarray}
		&\left\| \cF_{\alpha,2} (\beta, t, w) \right\|
		\leq C_{\beta,2}(\nu) t^{1-\gamma}  (1+e^{a_m t})
		\|u_1\|,
		\label{eq:FCP_ISO_ML_cont_repr_norm_est}
		\\
		&\left\|{\cF_{\alpha,2}}(\beta, t, \cdot)\right\|_{{\bf H}^1(D_{d-\delta})}
		\leq  C_{\beta,2}^\pm (\delta) t^{1-\gamma}  (1+e^{a_m t}) \|u_1\|,\hspace*{3.2em}
		\label{eq:FCP_ISO_ML_cont_repr_Hp_norm_est}
	\end{eqnarray}
	where $C_{\beta,2}^\pm  (\delta) = M_E \left(C_{\beta,2}(\delta -d) + C_{\beta,2}(d - \delta )\right)$, with some constant $M_E > 0$, and
	\[
		C_{\beta,2}(\nu)  =
		K_2\frac{b(\nu) \left(b^2(\nu)+(a(\nu) - a_0)^2 \right)^{\beta/2}}{(a(\nu) - a_0)^2  r_0(\nu)},
	\]
	with $r_0(\nu)$  and $a(\nu)$, $b(\nu)$ being defined by \eqref{eq:FCP_SO_exp_cor_repr_norm_const}  and \eqref{eq:FCPML_ab_mu}, correspondingly.
	Furthermore, using \eqref{eq:FCP_ISO_ML_cont_repr_norm_est} and the derivation procedure from the proof of \cref{lem:FCP_SO_MLISO_trunc_bound}, for the truncation error of approximation \eqref{eq:FCP_ISO_exp_cor_repr_par} we get
	\[
		\frac{h}{2\pi}\left \|\sum_{|k|>N}\!\!\cF_{\beta,2}\left (\beta, t, kh\right ) \right \|
		\leq \frac{M_E C_{\beta,2} (0)}{\pi }\frac{t^{1-\gamma}  (1+e^{a_m t}) } {e^{(N + 1) h}}\|u_1\|.
	\]
	The estimate $\left \|S_{\alpha}(t) x - \wt{S}_{\alpha,2}^\infty(\beta, t) x \right \|	\leq  \frac{e^{-\pi d/h}}{2 \sinh (\pi d/h)}\|{\cF_{\alpha,2}}(\beta, t, \cdot)\|_{{\bf H}^1(D_d)}$
	together with bound \eqref{eq:FCP_ISO_ML_cont_repr_Hp_norm_est} and the foregoing estimate for the truncation error yield
	\begin{equation}\label{eq:FCP_ISO_ML_err_est}
		\left\|S_{\alpha,2}(t) u_1 - \wt{S}_{\alpha,2}^{N}(\beta, t) u_1\right\|
		\leq {C_2} t^{1-\gamma}  (1+e^{a_m t}) \exp{\left(-c\sqrt{N}\right)} \|u_1\|,
	\end{equation}
	with $C_2 = c_0 C_{\beta,2}^\pm  + M_E C_{\beta,2} (\kappa,0)$ and $h=\sqrt{{\pi \omega(\Theta)}/{N}}$, obtained by the similar means as in \cref{thm:FCPML_prop_appr}.
	In the last step, we equate the order of decay in \eqref{eq:FCP_SO_MLcor_err_est} and \eqref{eq:FCP_ISO_ML_err_est} by setting the discretization parameters of $\wt{u}_{\mathrm{h}}^N(t)$ to $N_1 = N$, $N_2 = \lceil \kappa \beta N\rceil$.
	This will produce estimate \eqref{eq:FCP_hom_sol_err_est} with the constant $C_{\kappa} = \max{\left\{C_1, C_2 \|u_1\|/\|A^{\kappa}u_0\|\right\} }$.
\end{proof}

Let us again underscore that the convergence order of the newly derived numerical scheme for $u_\mathrm{h}(t)$  is asymptotically equal to $\cO\left (e^{-c\sqrt{\kappa\beta N}}\right )$
irrespective of the specific choice of $\alpha \in (0, \beta]$.
In this regard, \cref{thm:FCPML_hom_sol_appr}  can be viewed as a generalization of the earlier results \cite{gm5,bGavrilyuk2011} devoted to $\alpha = 1$, and of the corresponding result from \cite{Sytnyk2023}, recovered by setting $\beta = \alpha$.
The mentioned numerical methods are based on the time-independent contour $\Gamma_I$, optimized towards the stable and efficient evaluation of $u_\mathrm{h}(t)$ for multiple values of $t \in [0, T]$.
These are not to be confused with another class of similar methods \cite{lopez-fernandez1,Weideman2010,McLean2010}, that pursue faster theoretical convergence $\cO\left (e^{-C N/\ln{N}}\right )$ for a fixed $t \in (t_0, T]$, by choosing the contour $\Gamma_I$ in \eqref{eq:FCP_SO_cont_repr} to be both $t$ and $N$ dependent.
Unfortunately, such choice of $\Gamma_I$  for $u_\mathrm{h}(t)$ is thoroughly studied only under strict conditions: $t_0 > 0$, $u_0 \in D(A)$, making these methods unsuitable for our application scenario.

Next, we approximate the inhomogeneous part $u_{\mathrm{ih}}(t)$ of \eqref{eq:FCP_InhomSol_rep}, by combining its discretized representation $\wt{u}_{\mathrm{ih}}^N(t)$, derived in \cite{Sytnyk2023}, with subordination based approximation \eqref{eq:FCP_SO_MLcor_sinc_quad}.
For a fixed $N \in \N$, let
\begin{equation}\label{eq:FCP_inhom_sol_appr}
	\begin{aligned}
		\wt{u}_{\mathrm{ih}}^N(t)  = & \wt{J}_\alpha^{N_0} \wt{S}_{\alpha}^N(\beta, t)f(0) + h_1 \sum\limits_{k=-N_1}^{N_1}  \mathcal{G}_{\alpha,\beta}^{N_2}(0, t, kh_1) \\
		                             & + \frac{h_3 h_4}{2\pi i}\sum\limits_{\ell =-N_3}^{N_3}F_{\beta,1} (\ell h_3)
		\sum\limits_{k=-N_4}^{N_4} \mathcal{G}_{\alpha,\beta}^{N_5}(z(l h_3), t, kh_4),
	\end{aligned}
\end{equation}
where $N_i = N_i(f, \alpha, \beta, N)$, $h_i>0$, $i=0,\ldots 5$,  the function $F_{\beta,1} (\xi)$ is defined by \cref{eq:FCP_SO_MLcor_repr_par},
\[
	\begin{aligned}
		\mathcal{G}_{\alpha,\beta}^N(z, t, p) & =
		t\psi'(p) E_{\gamma,1}{\left(z t^\gamma (1-\psi(p))^\gamma\right)} \wt{J}_\alpha^{N}  f'\left ( t\psi(p) \right ), \quad \psi(p) = \frac{e^p}{1+e^p},
	\end{aligned}\]
and $\wt{J}_\alpha^{N}$ is the sinc-quadrature approximation of the Riemann--Liouville operator proposed in \cite[Prop. 2]{Sytnyk2023}.
The convergence of $\wt{u}_{\mathrm{ih}}^N(t)$ is characterized by the following result.
\begin{corollary}\label{thm:FCP_ML_inhom_sol_appr}
	Assume that operator  $A$ and parameters $\alpha$, $\beta$, $a_0$, $a_m$,  $c$, $\omega(\Theta)$ are defined as in \cref{thm:FCPML_prop_appr}.
	If the right-hand side $f(t)$ of \eqref{eq:FCP_DE} admits the analytic extension into the "eye-shaped" domain \cite{Stenger1993}: $z \in D_d^2$, $d \in (0, \pi/2)$, and $f(0),f'(z) \in D(A^{\chi})$, for some $\chi > 0$, then the  error of approximation $\wt{u}_{\mathrm{ih}}^N(t)$ from \eqref{eq:FCP_inhom_sol_appr} satisfies the bound
	\begin{equation}\label{eq:FCP_inhom_sol_err_est}
		\left \|u_{\mathrm{ih}}(t) - \wt{u}_{\mathrm{ih}}^N(t) \right \|
		\leq
		{C_{f,\chi}}  L(t)
		\exp{\left( -c\sqrt{\beta\chi N}\right)},
	\end{equation}
	with $L(t) = \left(\frac{t}{\beta \chi} + \frac{1+t}{\Gamma(\alpha)}t^\alpha +  \frac{\chi + t(1 + \chi) }{\chi} t^\alpha  (1+e^{a_m t})   \right)$ and $h = h_i =\sqrt{\frac{\pi  \omega(\Theta)}{\beta \chi N}}$, $i = 0, \ldots 5$,  provided that the values of $N_i$ in \eqref{eq:FCP_inhom_sol_appr}  are chosen as
	\begin{equation}\label{eq:FCP_inhom_sol_N}
		N_1 = N_4 = \lceil{\beta\chi N}\rceil,
		\quad
		N_3 = N,
		\quad
		N_0 = N_2 = N_5 = \left\lceil \frac{\beta\chi N}{\min\left \{1, \alpha\right \}} \right\rceil.
	\end{equation}
	The constant $C_{\chi,f}$ is independent of $t,N$.
\end{corollary}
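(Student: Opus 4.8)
The plan is to treat this as a genuine corollary of \cref{thm:FCPML_prop_appr} together with the inhomogeneous-part analysis of \cite{Sytnyk2023}, exploiting the fact that $\wt{S}_{\alpha}^N(\beta,t)$ is interchangeable with the propagator approximation used there, so that the latter analysis transfers almost verbatim with $\alpha$ replaced by $\beta$ in the final convergence rate. First I would split the inhomogeneous part into its two natural constituents,
\[
	u_{\mathrm{ih}}(t) = J_\alpha S_{\alpha}(t)f(0) + \intl_0^t S_{\alpha}(t - s)\, J_\alpha f'(s)\, ds,
\]
and match them termwise to \eqref{eq:FCP_inhom_sol_appr}: the first constituent is approximated by $\wt{J}_\alpha^{N_0}\wt{S}_{\alpha}^N(\beta,t)f(0)$, while the convolution is approximated by the single sum over $k$ at $z=0$ plus the double sum carrying $F_{\beta,1}(\ell h_3)$.

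For the first constituent I would insert the intermediate quantity $\wt{J}_\alpha^{N_0} S_\alpha(t)f(0)$ and estimate, by the triangle inequality,
\[
	\left\| J_\alpha S_\alpha(t)f(0) - \wt{J}_\alpha^{N_0}\wt{S}_\alpha^N(\beta,t)f(0)\right\|
	\leq \left\|(J_\alpha - \wt{J}_\alpha^{N_0}) S_\alpha(t)f(0)\right\|
	+ \left\| \wt{J}_\alpha^{N_0}(S_\alpha(t) - \wt{S}_\alpha^N(\beta,t))f(0)\right\|.
\]
The first term is the quadrature error of $J_\alpha$ applied to the exact propagator function $s \mapsto S_\alpha(s)f(0)$, controlled by \cite[Prop. 2]{Sytnyk2023} via the requisite analyticity and the regularity $f(0)\in D(A^\chi)$; the second is the stable quadrature $\wt{J}_\alpha^{N_0}$ applied nodewise to the propagator error, controlled by \cref{thm:FCPML_prop_appr} with $\kappa=\chi$, contributing a factor $t^\alpha$ and the rate $\exp(-c\sqrt{\beta\chi N})$. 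The choice $N_0 = \lceil \beta\chi N/\min\{1,\alpha\}\rceil$ in \eqref{eq:FCP_inhom_sol_N} is exactly what equalizes the $\alpha$-dependent convergence order of $\wt{J}_\alpha^{N_0}$ with the target rate.

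For the convolution constituent I would first apply the ML-corrected contour representation \eqref{eq:FCP_SO_MLcor_cont_repr} to $S_\alpha(t-s)$ and interchange the contour and time integrals by Fubini's theorem, justified by the strong continuity of $S_\alpha$ exactly as in the derivation of $S_{\alpha,2}(t)u_1$ earlier. Separating the residue contribution from the contour part splits the integral into the piece carried by the single sum at $z=0$ (where $E_{\gamma,1}(0)=1$, so $\mathcal{G}_{\alpha,\beta}^{N_2}(0,t,\cdot)$ reduces to the sinc discretization of $\intl_0^t J_\alpha f'(s)\,ds$ under $s=t\psi(p)$) and the piece carried by the double sum. I would then estimate three coupled quadratures: the outer time-quadrature via the substitution $s=t\psi(p)$ (standard sinc analysis on the eye-shaped domain $D_d^2$ of \cite{Stenger1993}, which is precisely where the analyticity of $f$ is consumed), the inner $\wt{J}_\alpha^{N_5}f'$ via \cite[Prop. 2]{Sytnyk2023}, and the outer contour-quadrature via the bounds on $F_{\beta,1}$ from \cref{prop:FCP_res_cor} together with the Mittag--Leffler bound \eqref{eq:ML1_bound}, which produces the $1+e^{a_m t}$ factor. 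Collecting the polynomial-in-$t$ prefactors accumulated across these steps assembles $L(t)$, and the choices $N_1=N_4=\lceil\beta\chi N\rceil$, $N_3=N$, $N_2=N_5=\lceil\beta\chi N/\min\{1,\alpha\}\rceil$, $h=h_i=\sqrt{\pi\omega(\Theta)/(\beta\chi N)}$ drive every exponential to the common rate $\exp(-c\sqrt{\beta\chi N})$.

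The main obstacle is the convolution constituent: keeping the three nested discretizations in balance so that each separate quadrature error decays at the single rate $\exp(-c\sqrt{\beta\chi N})$, while correctly tracking the $t$-dependent prefactors that accumulate into $L(t)$. In particular, one must verify that the smoothing $f'(z)\in D(A^\chi)$ delivers uniform control of the resolvent-correction factor on $\Gamma_I$ while the analyticity of $f$ in $D_d^2$ simultaneously yields exponential decay of the time-quadrature; the two regularity hypotheses act on different quadratures and must both be present. The remaining steps are routine once the interchangeability of $\wt{S}_\alpha^N(\beta,t)$ with the propagator approximation of \cite{Sytnyk2023} is invoked, so that the inhomogeneous error analysis of that paper transfers with the propagator rate now governed by $\beta$.
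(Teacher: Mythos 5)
Your proposal is correct and takes essentially the same approach as the paper: the paper's own proof consists of the single remark that the argument of Theorem 3 in \cite{Sytnyk2023} repeats verbatim with the two amendments $\chi\alpha \to \chi\beta$ and $e^{a_0 t} \to 1+e^{a_m t}$, which is exactly the transfer you carry out, only spelled out term by term. Your termwise matching of \eqref{eq:FCP_inhom_sol_appr} to the two constituents of $u_{\mathrm{ih}}(t)$, the role you assign to the two regularity hypotheses, and your reading of the choices \eqref{eq:FCP_inhom_sol_N} as equalizing the $\alpha$-dependent rate of $\wt{J}_\alpha^{N}$ with the target rate $\exp(-c\sqrt{\beta\chi N})$ are all consistent with that referenced proof.
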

The proof of \cref{thm:FCP_ML_inhom_sol_appr} repeats the proof of Theorem 3  from \cite{Sytnyk2023}, with two amendments: $\chi \alpha \to \chi \beta$, ${e^{a_0 t}} \to {1+e^{a_m t}}$, caused by the switch to the subordination based approximation for $S_{\alpha}(t)$.
It will be omitted here for brevity.

As we can see from \eqref{eq:FCP_inhom_sol_err_est}, the error decay of $\wt{u}_{\mathrm{ih}}^N(t)$ is also controlled by $\beta$.
Hence, it is now possible for us to reach the target solution accuracy without the need to increase $N$ as in \cite{Sytnyk2023}, when progressively smaller fractional orders $\alpha$ are considered.
Computationally such process is not entirely free, because the parameters $N_i$, $i=0,2,5$,  from \eqref{eq:FCP_inhom_sol_N} grow proportionally to $1/\alpha$, driven by the kernel singularity of $J_\alpha$ in \eqref{eq:FCP_InhomSol_rep}.
This feature of \cref{thm:FCP_ML_inhom_sol_appr} inflicts a small but discernible increase of the round-off error of $\wt{u}_{\mathrm{ih}}^N(t)$, depicted in \cref{fig:FCPML_Ex1_inhom_err_vs_N}(a).

It is worthwhile to mention that, unlike earlier methods from \cite{McLean2010,Colbrook2022a},  formula \eqref{eq:FCP_inhom_sol_appr} requires only the knowledge of $f(0)$, $f'(t)$, $t \in (0, T]$.
Moreover, it permits for $f'(t)$ to have an integrable singularity at $t=0$, which can be handled without modifications to \eqref{eq:FCP_inhom_sol_appr} -- \eqref{eq:FCP_inhom_sol_N}, if $|f'(t)| < t^{\alpha-1}$.
For more detailed discussion about the computational capabilities of the numerical schemes for $\wt{u}_{\mathrm{h}}^N(t)$, $\wt{u}_{\mathrm{ih}}^N(t)$ and their algorithmic implementation, we refer the reader to Sections 3.3--3.4 of \cite{Sytnyk2023}.

The successful application of the developed schemes is contingent upon the ability to evaluate the Mittag-Leffler function $E_{\gamma,\sigma}(s)$, involved in the representations of $\wt{S}_{\alpha}^{N}(\beta, t)$ and $\wt{S}_{\alpha,2}^N(\beta, t)$, to the required precision.
We delegate this task to the optimal parabolic contour (OPC) algorithm from \cite{Garrappa2015}.
Functionally, OPC also relies on the residue-assisted quadrature of the integral in \eqref{eq:FCP_SO_cont_repr},  optimized for each given scalar input $A = s$.
It is, therefore, capable of handling large complex $s \in \Gamma_I$, if $\Re{s}$ is bounded.
\begin{remark}\label{rem:FCPML_OPC_omega}
	The OPC algorithm is able to numerically evaluate $E_{\gamma,\sigma}(s)$ in \eqref{eq:FCP_SO_MLcor_sinc_quad} and \eqref{eq:FCP_ISO_exp_cor_repr_par} for any $N \in \N$,  $\gamma \in (0, 1]$, $\sigma \in [1, 2)$,  if the parameter $\omega(\Theta)$ from \eqref{eq:FCP_hyp_cont_par_final} satisfies the inequality $\omega(\Theta) \leq \omega_{c}$, with $\omega_{c} = \phi_s - \max{\left\{{\pi\gamma}/{2}, \pi - (\pi - \varphi_s)/{\beta}\right\}}$.
\end{remark}
Clearly, the OPC condition $\omega(\Theta) \leq \omega_{c}$ is more restrictive than the constraint $\omega(\Theta) \leq \omega_{m}$, enforced by representation \eqref{eq:FCP_SO_MLcont_repr} alone. However, these two conditions coincide if the operator $A$ is self-adjoint ($\varphi_s=0$) and $\beta=1$.
\FloatBarrier
\begin{example}\label{ex:FCPML_ex1_hom_R_eigenfunction}
	\begin{figure}[tb]
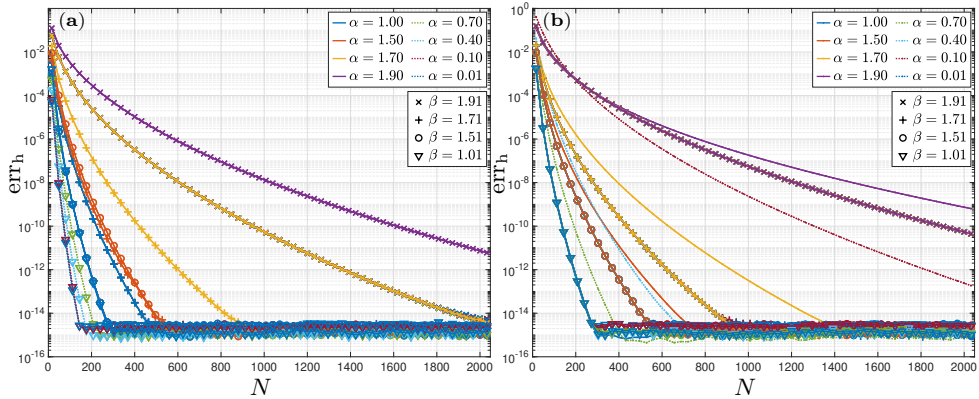

		\newdimen\lentwosubfig
		\lentwosubfig=0.5\linewidth
		\hspace*{-0.5em}
		\ifpdf
			\begin{overpic}[width=0.98\lentwosubfig, viewport=100 40 1160 900, clip=true]%
				{Ex1/FCPML_hom_error_vs_N_om_beta_1_1.5_1.7_1.9_nf0_1_ndf_4_D_1_large}
				\put(48,-2){\small $N$}
				\put(-2,40){\small \rotatebox{90}{$\mathrm{err}_{\mathrm{h}}$}}
				\put(8,75){\scriptsize ({\bf a})}
			\end{overpic}
		\else
			\includegraphics[width=0.98\lentwosubfig, viewport=100 40 1160 900, clip=true]%
			{Ex1/FCPML_hom_error_vs_N_om_beta_1_1.5_1.7_1.9_nf0_1_ndf_4_D_1_large}
		\fi
		\hspace*{-0.5em}
		\ifpdf
			\begin{overpic}[width=0.98\lentwosubfig, viewport=100 40 1160 900, clip=true]%
				{Ex1/FCPML_hom_error_vs_N_beta_1_1.5_1.7_1.9_nf0_1_ndf_4_D_1_large}
				\put(48,-2){\small $N$}
				\put(-2,40){\small \rotatebox{90}{$\mathrm{err}_{\mathrm{h}}$}}
				\put(8,75){\scriptsize ({\bf b})}
			\end{overpic}
		\else
			\includegraphics[width=0.98\lentwosubfig, viewport=100 40 1160 900, clip=true]%
			{Ex1/FCPML_hom_error_vs_N_beta_1_1.5_1.7_1.9_nf0_1_ndf_4_D_1_large}
		\fi
		\caption[Example 1: Homogeneous problem error versus N]{%
			Sup-norm error of the approximate solution to fractional Cauchy problem \eqref{eq:FCP_DEBC},  \eqref{eq:FCP_ex1_hom_A}, \eqref{eq:FCP_ex1_hom_IV} with $\delta = 0$, $f(t) = 0$, $T=1$ and the angular size:
			(\textbf{a})  $\omega(\Theta) = \omega_c$; %
			(\textbf{b}) $\omega(\Theta) = \omega_\star$. %
			\vspace{-8pt}
		}
		\label{fig:FCPML_Ex1_hom_err_vs_N}
	\end{figure}
	Let us consider problem \eqref{eq:FCP_DEBC} with $A$ being defined as a negative Laplacian accompanied by the Dirichlet boundary conditions on $[0, 1]$:
	\begin{equation}\label{eq:FCP_ex1_hom_A}
		\begin{split}
			 & Au= - \frac{d^2}{dx^2}u, \quad \forall u \in D(A) \equiv \{u(x) \in H^2(0,1): \ u(0) = u(1) =  0\}.
		\end{split}
	\end{equation}
	For the given $k_0$, $k_1 \in \N$, $\delta \geq 0$, we choose $u_0, u_1$ as follows
	\begin{equation}\label{eq:FCP_ex1_hom_IV}
		u_0 = \left(x-x^2\right)^{2\delta}\sin{\pi k_0 x}, \quad u_1 = \sin{\pi k_1 x}.
	\end{equation}
	In the first set of experiments, we consider $\delta = 0$, $f(t) = 0$, $T=1$, $k_0 = 1$, $k_1 = 4$.
	Then, $u_0(x)$ and $u_1(x)$ are the eigenfunctions of $A$.
	In such case, fractional Cauchy problem \eqref{eq:FCP_DEBC},  \eqref{eq:FCP_ex1_hom_A}, \eqref{eq:FCP_ex1_hom_IV} admits the exact solution \cite[Sec. 1.3]{Bazhlekova2001}:
	\[
		u_\mathrm{h}(t) = E_{\alpha,1}(- 16 \pi^2 t^\alpha)  \sin{4\pi x} + H(\alpha-1)E_{\alpha,2}(-\pi^2 t^\alpha) \sin{\pi x}.
	\]
	Here, $H(\alpha)$ is the Heaviside function.
	The action of resolvent $R(z, A)$ on the chosen initial values can be evaluated explicitly: $R(z, A)\sin{\pi k x}
		= (z I -A)^{-1}\sin{\pi k x}
		= (z - \pi^2 k )\sin{\pi k x}$.
	Thus, the error of numerical solution $\wt{u}_\mathrm{h}^N(t)$ will not contain the spatial discretization component.
	To quantify the experimental error we define
	$\mathrm{err}_{\mathrm{h}}(N)
		=  \max\limits_{t \in \mathcal{T}} 	\left\|u_\mathrm{h}(t) - \wt{u}_{\mathrm{h}}^N(t)\right\|_\infty$.
	It is calculated using the MATLAB implementation\footnote{The code is available at \url{github.com/DmytroSytnyk/FCPML2025}} of the numerical scheme, evaluated at the uniform grid $\mathcal{T} \subset [0, T]$ including both endpoints.
	We conducted two series of experiments with $\kappa =1$, $\varphi_s = \pi/60$ and $\omega(\Theta) \in \{\omega_c, \omega_\star\}$.

	\Cref{fig:FCPML_Ex1_hom_err_vs_N}(a) illustrates the behavior of $\mathrm{err}_{\mathrm{h}}(N)$ for all feasible combinations of $\alpha \in \{0.1, 0.3, 0.5, 0.7, 1,  1.5, 1.7, 1.9\}$ and $\beta \in \{1.01,  1.51, 1.71, 1.91\}$ under the setting $\omega(\Theta) = \omega_c$, mentioned in  \cref{rem:FCPML_OPC_omega}.
	For each combination, the error decays exponentially, as predicted by \eqref{eq:FCP_hom_sol_err_est}, until it reaches the round-off plateau below $10^{-14}$ (not shown for $\alpha=1.9$).
	Moreover, for a fixed $\beta$, the convergence order $c=\sqrt{\pi \omega(\Theta)}$ increases as $\alpha \to 0$ until $\alpha/2 \leq \beta - 1 + \varphi_s/\pi $, and remains constant thereafter.
	In the plot, this  occurs at $\alpha = 1.7$ for $\beta=1.91$, at $\alpha = 1$ for $\beta = 1.71, 1.51$, and at $\alpha = 0.1$ for $\beta = 1.01$.
	We recall that the change of $\omega(\Theta)$ is causing the reposition of $\Gamma_I$ and the subsequent re-evaluation of all resolvents in \eqref{eq:FCP_SO_MLcor_sinc_quad}, \eqref{eq:FCP_ISO_exp_cor_repr_par}.

	Such costly re-evaluation can be avoided if we set the angular size to $\omega(\Theta) = \omega_\star$.
	Corresponding error behavior is depicted in \cref{fig:FCPML_Ex1_hom_err_vs_N}(b), where we additionally plotted the results of our earlier method \cite{Sytnyk2023}, calculated using the equivalent parameter setup (colored curves without markers).
	The setting $\omega(\Theta) = \omega_\star$ leads to a slightly lower  experimental convergence rate than $\omega(\Theta) = \omega_c$, but the rate remains fixed across different values of $\alpha < \beta = \mathrm{const}$.
	In both considered settings, the subordination based approximation $\wt{S}_{\alpha}^{N}(1.01, t)$,  $\alpha \in (0.1, 0.5]$, turns out to be $2$ -- $9$ times faster than our earlier method.
	Somewhat surprisingly, we also perceive an evident improvement of the method's convergence versus \cite{Sytnyk2023} in the case when $\alpha \approx \beta$ (cf. same color curves with and without markers in \cref{fig:FCPML_Ex1_hom_err_vs_N}(b)).
	In our opinion, it might be caused by the better accuracy of the OPC algorithm for $E_{\gamma,\sigma}(s)$, $s \in \Gamma_I$, compared to the MATLAB inbuilt evaluation routines for $e^s$.

	In order to test whether the results from \cref{fig:FCPML_Ex1_hom_err_vs_N} are generalizable to the more application-oriented problems, we conducted another series of experiments for \eqref{eq:FCP_DEBC},  \eqref{eq:FCP_ex1_hom_A} with $f(t) = 0$; $T=1$; and the initial condition $u(0) = u_0/\|u_0\|_{\infty}$, determined by \eqref{eq:FCP_ex1_hom_IV}, with $k_0=3$ and the values of $\delta$ from \cref{tab:Ex1_regularity_vs_accuracy}.
	This time, we used the second-order FD discretization $\wt{A}$ of \eqref{eq:FCP_ex1_hom_A} along with the explicit linear solver for evaluation of $(z I + \wt{A})^{-1} u_0$.
	The calculated data displays an impact of the $\delta$-dependent smoothness of $u_0$ on the value of $N$, needed to reach $\mathrm{err}_{\mathrm{h}} \leq 10^{-14}$.

	\begin{table}[tbhp]
		\footnotesize
		\caption{\noindent The impact of regularity parameter $\delta$ from \eqref{eq:FCP_ex1_hom_IV} on the experimentally estimated value of $N = N_e$ needed to achieve $\mathrm{err}_{\mathrm{h}} \leq 10^{-14}$, with $\alpha\leq 1$, $k_0=3$, $\beta = 1.01$ and $h$ determined by \cref{thm:FCPML_hom_sol_appr}.}%
		\label{tab:Ex1_regularity_vs_accuracy}
		\centering
		\begin{tabular}{|l | c c c c c c c c c c c|}
			\hline
			$\hspace{3.2em}\delta$    & $0.01$          & $0.1$           & $0.2$           & $0.3$           & $0.4$           & $0.5$           & $0.6$           & $1.0$           & $2.1$           & $3.9$           & $7.1$           \\
			\hline
			$N_e$ ($\kappa = \delta$) & ---             & 2109            & 1092            & 733             & 546             & 439             & 367             & 286             & 606             & 1138            & ---             \\
			$N_e$ ($\kappa = 0.7$)    & 313             & 290             & 310             & 318             & 317             & 317             & 317             & 316             & 316             & 316             & 316             \\
			$N_e$ ($\kappa = 0.8$)    & \underline{259} & \underline{274} & \underline{265} & \underline{251} & \underline{254} & 262             & 261             & 261             & \underline{259} & \underline{259} & \underline{259} \\
			$N_e$ ($\kappa = 0.9$)    & 291             & 308             & 298             & 283             & 265             & \underline{259} & \underline{259} & \underline{260} & 262             & 265             & 268             \\
			$N_e$ ($\kappa = 1$)      & 323             & 342             & 330             & 313             & 294             & 285             & 286             & 286             & 289             & 292             & 297             \\
			$N_e$ ($\kappa = 1.1$)    & 355             & 376             & 363             & 345             & 323             & 314             & 314             & 315             & 317             & 320             & 325             \\
			\hline
		\end{tabular}
	\end{table}
	Judging from the \cref{tab:Ex1_regularity_vs_accuracy}, we conclude that the proposed scheme is robust with respect to $\delta$.
	Moreover, a universally accepted strategy of setting $\kappa=1$ notwithstanding the actual spatial smoothness of $u_0$ is, in fact, practically viable.
	It is interesting, though, that $\kappa = 1$ is only quasi-optimal in terms of error.
	The optimal $\kappa \in (0, 1]$ can be estimated via the parabolic fit of the error $\left\|  u(0) - \wt{u}_{\mathrm{h}}^N(0)\right\|_\infty$, that is readily available for arbitrary $u_0$, $u_1$ and $A$.

	Next, we turn to the experimental validation of inhomogeneous solution scheme \eqref{eq:FCP_inhom_sol_appr}, \eqref{eq:FCP_inhom_sol_N}.
	Let $u_0 = u_1 = 0$, $f(t) = \sin{\pi x} + t\sin{4 \pi x}$, then the mild solution to problem \eqref{eq:FCP_DEBC},  \eqref{eq:FCP_ex1_hom_A} can be expressed as 
	\[
		u_\mathrm{ih}(t) =
		\left (1 - E_{\alpha,1}(- \pi^2 s^\alpha)\right )\tfrac{\sin{\pi x}}{\pi^2}
		+  \int_0^t \tfrac{E_{\alpha,1}(- 16 \pi^2 (t-s)^\alpha) } {\Gamma(\alpha+1)}s^\alpha ds \sin{ 4 \pi x}.
	\]
	We approximate the integral to the round-off precision $\leq 10^{-15}$ and consider the resulting $u_\mathrm{ih}(t)$  as a proxy for exact solution \eqref{eq:FCP_InhomSol_rep}.
	In all other respects, our next experiment copies the setup of the homogeneous solution experiment with $\omega(\Theta) = \omega_\star$.
	The behavior of the calculated error $\mathrm{err}_{\mathrm{ih}}(N) \equiv  \max\limits_{t \in \mathcal{T}} 	\left\|  u_\mathrm{ih}(t) - \wt{u}_{\mathrm{ih}}^N(t)\right\|_\infty$ is depicted in \cref{fig:FCPML_Ex1_inhom_err_vs_N}(a).
	In this plot, the exponentially decaying error curves for fixed $\beta$ and different $\alpha$ overlap, which indicates that the convergence order of $\wt{u}_{\mathrm{ih}}^N(t)$ is independent of $\alpha$, exactly as postulated by \cref{thm:FCP_ML_inhom_sol_appr}.

	\begin{figure}[tb]
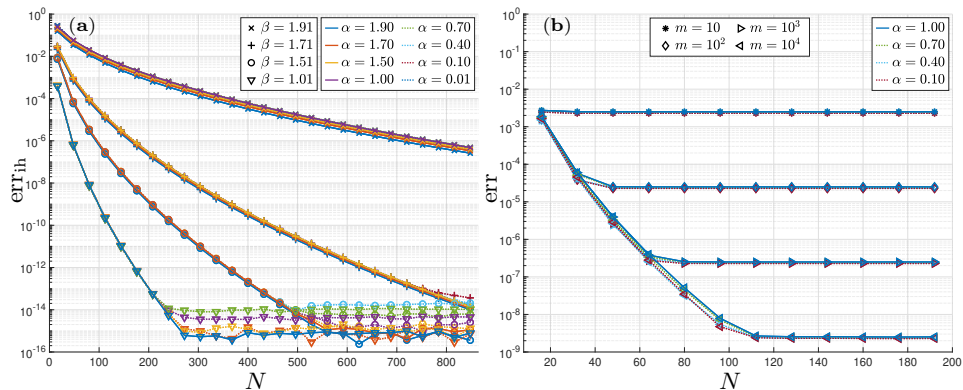

		\hspace*{-6pt}
		\centering
		\newdimen\lentwosubfig
		\lentwosubfig=0.485\linewidth
		\hspace*{0.4em}
		\ifpdf
			\begin{overpic}[width=0.98\lentwosubfig, viewport=100 40 1160 895, clip=true]%
				{Ex1/FCPML_inhom_error_vs_N_beta_1_1.5_1.7_1.9_nf0_1_ndf_4_D_1_large}
				\put(48,-2){\small $N$}
				\put(-2,38){\small \rotatebox{90}{$\mathrm{err}_{\mathrm{ih}}$}}
				\put(10,75){\scriptsize ({\bf a})}
			\end{overpic}
		\else
			\includegraphics[width=0.98\lentwosubfig, viewport=100 40 1160 895, clip=true]%
			{Ex1/FCPML_inhom_error_vs_N_beta_1_1.5_1.7_1.9_nf0_1_ndf_4_D_1_large}
		\fi
		\ifpdf
			\begin{overpic}[width=0.98\lentwosubfig, viewport=100 40 1160 900, clip=true]%
				{Ex1/FCPML_Cauchy2_error_vs_N_beta_1_alpha_1.0_0.7_0.4_0.1_D1_large}
				\put(48,-2){\small $N$}
				\put(-2,38){\small \rotatebox{90}{$\mathrm{err}$}}
				\put(10,75){\scriptsize ({\bf b})}
			\end{overpic}
		\else
			\includegraphics[width=0.98\lentwosubfig, viewport=100 40 1160 900, clip=true]%
			{Ex1/FCPML_Cauchy2_error_vs_N_beta_1_alpha_1.0_0.7_0.4_0.1_D1_large}
		\fi
		\caption[Example 1: Inhomogeneous problem error versus N]{%
			Sup-norm error of the approximate solution to fractional Cauchy problem \eqref{eq:FCP_DEBC},  \eqref{eq:FCP_ex1_hom_A}, with $T=1$, $\omega(\Theta) = \omega_\star$ and:
			(\textbf{a})  $u_0 = u_1 = 0$, $f(t) = \sin{\pi x} + t\sin{4 \pi x}$; %
			(\textbf{b})  $u_0$, $u_1$ and $f(t)$ calculated from the exact solution $u(t) = x^2 (x - 1) \left(x - t ^ 2 + \tfrac{1}{2} \right )$.%
			\vspace{-8pt}
		}
		\label{fig:FCPML_Ex1_inhom_err_vs_N}
	\end{figure}
	In the last series of experiments, we consider a general fractional Cauchy problem \eqref{eq:FCP_DEBC},  \eqref{eq:FCP_ex1_hom_A} 	with the data $u_0 = x^2 (x - 1) \left(x  + \tfrac{1}{2} \right )$, $f(0) = 1 + 3 x -12 x^2$, $f'(t) =   12 x t - 4 t - 2  \frac{x^2 (x - 1)}{\Gamma{(2 - \alpha)}} t^{1 - \alpha}$, calculated from the given exact solution $u(t) = x^2 (x - 1) \left(x - t ^ 2 + \tfrac{1}{2} \right )$.
	The approximate solution $\wt{u}^N(t) = \wt{u}_{\mathrm{h}}^N(t)  + \wt{u}_{\mathrm{ih}}^N(t)$  is evaluated using a fully-discretized method, comprised of the schemes for $\wt{u}_{\mathrm{h}}^N(t)$, $\wt{u}_{\mathrm{ih}}^N(t)$, in their $\alpha$  independent versions, and the spatial discretization $A \to \wt{A}$, discussed earlier.
	The resulting values of $\mathrm{err} \equiv  \max\limits_{t \in \mathcal{T}} 	\left\|  u(t) - \wt{u}^N(t)\right\|_\infty$ are shown in \cref{fig:FCPML_Ex1_inhom_err_vs_N}(b) for the spatial grid sizes $m = 10, 10^2, 10^3, 10^4$.
	In addition to the apparent method's stability for all considered combinations of $N$ and $m$, the displayed results give us a more concrete evidence supporting the use of static contour $\Gamma_I$ for propagator approximation \eqref{eq:FCP_SO_MLcor_sinc_quad}.
	While being around $5$ times slower than the methods with time-dependent contours \cite{lopez-fernandez1,Weideman2010}, our approximation $\wt{S}_{\alpha}^{N}(\beta, t)$ allows the resolvent reuse across different $t$, which quickly offsets the slower convergence during the computation of  $\wt{u}_{\mathrm{ih}}^N(t)$.
	For instance, the calculation of $\wt{J}_\alpha^{N_0} \wt{S}_{\alpha}^N(\beta, t)f(0)$ requires $2 N_0 + 1$ reevaluations of the propagator, hence our method's speed-up for this term is proportional to $N/{\min\left \{1, \alpha\right \}}$.
\end{example}

\FloatBarrier
\begin{example}\label{ex:FCPML_ex2_inv_alpha_hom_R_FD}
	In this example we consider an inverse problem of identifying the fractional order parameter $\alpha$ for  \eqref{eq:FCP_DEBC}, \eqref{eq:FCP_ex1_hom_A}, \eqref{eq:FCP_ex1_hom_IV}, with $f(t) = 0$, $\delta = 0$, $k_0 = 2$, $k_1 = 4$, from a set of pointwise solution measurements.
	This problem admits a unique solution \cite{Yamamoto2021}.
	We conducted 1000 independent experiments, where a uniformly randomly selected $\alpha \in [0.1, 1.6]$ was used to generate a "measured" data $u(t,x)$, $t \in \mathcal{T}$, $x = \pi/10$  from the exact solution.
	Then, two candidate values of $\alpha_{\mathrm{fit}}$ were computed via the minimization of $\sum\|u(t) - \wt{u}_{\mathrm{h}}^{128}(t)\|_\infty^2$.
	The approximate solution  $\wt{u}_{\mathrm{h}}^{128}(t)$ is obtained using: our earlier method \cite{Sytnyk2023}, which corresponds to $\beta = \alpha$ in \eqref{eq:FCP_SO_MLcor_sinc_quad};  the scheme from \cref{thm:FCPML_prop_appr}, with $\beta=1.6$, $\omega(\Theta) = \omega_\star$.
	The minimization process is implemented using MATLAB \verb*|lscurvefit| routine, with the initial guess $\alpha_0 = 0.85$ and the default stopping criteria.
	We used of the fact that $A$, $u_0$, $u_1$ are real-valued to reduce the number of evaluated terms in \eqref{eq:FCP_SO_MLcor_sinc_quad} from $2N+1$ to $N+1$ via Remark 1 of \cite{Sytnyk2023}.

	\begin{figure}[tb]
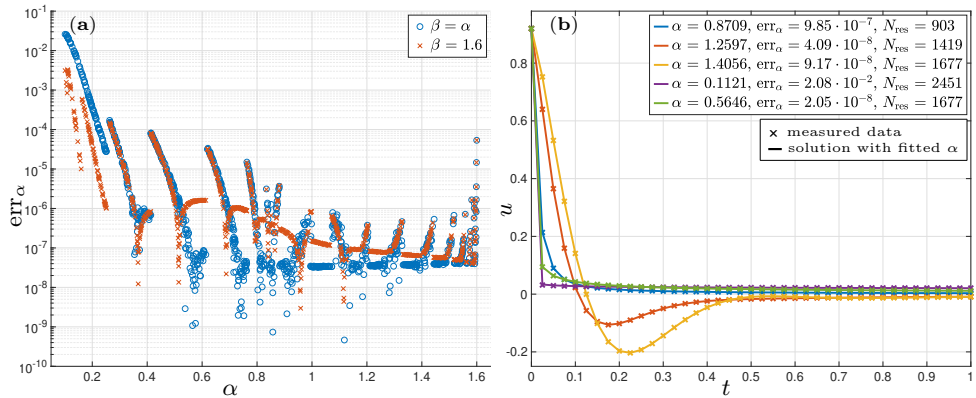

		\newdimen\lentwosubfig
		\lentwosubfig=0.5\linewidth
		\hspace*{0.1em}
		\ifpdf
			\begin{overpic}[width=0.98\lentwosubfig, viewport=90 60 1080 900, clip=true]%
				{Ex2/fit_err_vs_trials_FCPML_14_alpha_0.10_1.60_N_128_t_0_1_n0_2_n1_4_Nt_40_Nx_5000_trials_1000_color_inv}
				\put(42,-2){\small $\alpha$}
				\put(-2,33){\small \rotatebox{90}{$\mathrm{err}_{\alpha}$}}
				\put(10,74){\scriptsize ({\bf a})}
			\end{overpic}
		\else
			\includegraphics[width=0.98\lentwosubfig, viewport=90 60 1080 900, clip=true]%
			{Ex2/fit_err_vs_trials_FCPML_14_alpha_0.10_1.60_N_128_t_0_1_n0_2_n1_4_Nt_40_Nx_5000_trials_1000_color_inv}
		\fi
		\hspace*{-0.6em}
		\ifpdf
			\begin{overpic}[width=0.98\lentwosubfig, viewport=90 60 1080 900, clip=true]%
				{Ex2/fit_alpha_err_vs_t_FCPML_14_beta_alpha_alpha_0.10_1.60_N_128_t_0_1_n0_2_n1_4_Nt_40_Nx_5000_trials_5_si}
				\put(46,-2){\small $t$}
				\put(0,35){\small \rotatebox{90}{$u$}}
				\put(53.95,50.5){\setlength{\fboxrule}{0.1pt}%
					\fcolorbox{black}{white}{\tiny \begin{minipage}{10.6em}
							\markercross \hspace*{0.05em} measured data\par
							\markerline \hspace*{0.02em} solution with fitted $\alpha$
						\end{minipage}}}
				\put(10,74){\scriptsize ({\bf b})}
			\end{overpic}
		\else
			\includegraphics[width=0.98\lentwosubfig, viewport=90 60 1080 900, clip=true]%
			{Ex2/fit_alpha_err_vs_t_FCPML_14_beta_alpha_alpha_0.10_1.60_N_128_t_0_1_n0_2_n1_4_Nt_40_Nx_5000_trials_5_si}
		\fi
		\caption[Example 2: Error of $\alpha$ fit]{%
			Result of the fractional order identification experiments.
			(\textbf{a}) Absolute fitting error $\mathrm{err}_{\alpha}$ for 1000 values of $\alpha$, uniformly randomly distributed on $[0.1, 1.6]$; %
			(\textbf{b}) Measured data along with the fitted approximate solution for $x = \pi/10$, $\mathcal{T} = \{0, 1/40, \ldots, 1\}$.
			\vspace{-8pt}
		}
		\label{fig:FCPML_Ex2_inv_prob_err_vs_alpha}
	\end{figure}
	The results are shown in \cref{fig:FCPML_Ex2_inv_prob_err_vs_alpha}.
	Here, we demonstrate the overall dependence of the fitting error $\mathrm{err}_{\alpha} \equiv |\alpha - \alpha_{\mathrm{fit}}|$ on $\alpha$, alongside with the individual experimental outcomes for five $\alpha$ samples, computed using the method from \cite{Sytnyk2023}.
	On average, it requires $N_{\mathrm{res}} \approx 1624$ unique evaluations of $(z^\beta I + \wt{A})^{-1}u_0$ per $\alpha_{\mathrm{fit}}$.
	Whereas, the subordination based scheme requires only $N_{\mathrm{res}} = 129$ unique resolvent evaluations $(z^\beta I + \wt{A})^{-1}u_0$ for all computed $\alpha_{\mathrm{fit}}$, which amounts to $\approx 12.5$ times improvement.
	One would expect even bigger gains in the presence of measurement error, because the reliable reconstruction of $\alpha_{\mathrm{fit}}$ from the noised data requires multiple identification experiments for regularization and uncertainty estimation \cite{Zhang2018}.
\end{example}

\section*{Concluding remarks}
In this work, we developed a new exponentially convergent numerical method for Caputo fractional propagator $S_\alpha(t)$ that combines contour based approximation technique from \cite{Sytnyk2023} and subordination identity \eqref{eq:FCP_SO_subord_repr}, for expressing $S_\alpha(t)$ in terms of the propagator $S_\beta(t)$, with order $\beta \in \left[\alpha, 2  (1 - {\varphi_s}/{\pi}) \right]$.
The error of the proposed method is characterized by bound \eqref{eq:FCP_hom_sol_err_est}, providing $\mathcal{O} (e^{-c\sqrt{\kappa\beta N}})$ convergence rate,  for any fixed $t \in [0,T]$.
After being incorporated into the numerical scheme for problem \eqref{eq:FCP_DEBC}, the new method helps us to overcome limitations of approximations \cite{McLean2010,Colbrook2022a,Sytnyk2023}, causing the degradation of convergence when $\alpha \to 0$.
Explicit error estimates for the numerical solution to \eqref{eq:FCP_DEBC} are provided by \cref{thm:FCPML_hom_sol_appr} and \cref{thm:FCP_ML_inhom_sol_appr}.
In addition to the aforementioned uniform convergence rate, these results prove that the scheme is capable of handling problem's data with limited spatial smoothness, i.e. $\exists \kappa,\chi > 0$: $u_0, f(0) \in D(A^\kappa)$, $f'(t) \in D(A^{\chi}) $.

Besides the parameters of \eqref{eq:FCP_DEBC} and $\beta$, the error of numerical solution also depends on the choice of the integration contour.
We offer two distinct contour selection strategies.
The first strategy can be used to achieve the maximal practically feasible convergence rate $\mathcal{O} (e^{-\pi\sqrt{(1- \varphi_s/\pi-\alpha/2 ) \kappa N}})$ by choosing  $\beta \in [\alpha, 1 - \varphi_s/\pi + \alpha/2 ]$, under assumption that $\alpha$ is fixed.
The second strategy is intended for situations when one needs to evaluate the numerical solution for several different values of $\alpha$.
It is able to reuse the calculated resolvents across the entire range of $\alpha \in (0, \beta]$, while maintaining a fixed convergence rate $\mathcal{O} (e^{-\sqrt{\pi \beta \omega_\star \kappa N}})$, $\omega_\star = \min{\{\pi, \tfrac{\pi - \varphi_s}{\beta}\}} -  \tfrac{\pi}{2} \max{\{1, \tfrac{1}{\beta}\}}$.
Comparison between the strategies is shown in \cref{fig:FCPML_Ex1_hom_err_vs_N}.
In \cref{ex:FCPML_ex1_hom_R_eigenfunction}, we perform systematic experimental validation of the theoretical estimates by inspecting both semi- and fully-discretized variants of the developed scheme for problem \eqref{eq:FCP_DEBC}.
The computational benefits of the second strategy are further analyzed in \cref{ex:FCPML_ex2_inv_alpha_hom_R_FD},
where we deal with identification of $\alpha$ in \eqref{eq:FCP_DEBC} from the set of pointwise solution measurements.
The proposed strategy leads to at least an order of magnitude reduction in the number of resolvent evaluations compared to the naive approach.

\ifpdf
{
	\footnotesize
	\bibliographystyle{siamplain}
	\bibliography{bibliography.bib}%
}
\else
{
	\footnotesize

}
\fi

\end{document}